\newtheorem{lemma}{Lemma}
\newtheorem{theorem}{Theorem}
\newtheorem*{theorem*}{Theorem}
\newtheorem{proposition}{Proposition}
\newtheorem{conjecture}{Conjecture}
\newtheorem{remark}{Remark}
\title{Shifts of the Stable Kneser Graphs and Hom-Idempotence\footnote{Partially supported by MathAmSud Project 13MATH-07
(Argentina--Brazil--Chile--France) and by the French ANR JCJC CombPhysMat2Tens grant.}}
\author{Pablo Torres \footnote{Universidad Nacional de Rosario and CONICET, Rosario, Argentina. e-mail: ptorres@fceia.unr.edu.ar}
  \and Mario Valencia-Pabon\footnote{Universit\'e Paris-13, Sorbonne Paris Cit\'e LIPN, CNRS
UMR7030, Villetaneuse, France. e-mail: valencia@lipn.univ-paris13.fr}}
\date{}
\begin{document}
\maketitle
\sloppy

\begin{abstract}
A graph $G$ is said to be {\em hom-idempotent} if there is a homomorphism from $G^2$ to $G$, and {\em weakly hom-idempotent} if for some $n \geq 1$ there is a homomorphism from $G^{n+1}$ to $G^n$. Larose et al. [{\em Eur. J. Comb. 19:867-881, 1998}] proved that Kneser graphs $\operatorname{KG}(n,k)$ are not weakly hom-idempotent for $n \geq 2k+1$, $k\geq 2$. For $s \geq 2$, we characterize all the shifts (i.e., automorphisms of the graph that map every vertex to one of its neighbors) of $s$-stable Kneser graphs $\operatorname{KG}(n,k)_{s-\operatorname{stab}}$ and we show that $2$-stable Kneser graphs are not weakly hom-idempotent, for $n \geq 2k+2$, $k \geq 2$. Moreover, for $s,k\geq 2$, we prove that $s$-stable Kneser graphs $\operatorname{KG}(ks+1,k)_{s-\operatorname{stab}}$ are circulant graphs and so hom-idempotent graphs. Finally, for $s \geq 3$, we show that $s$-stable Kneser graphs $\operatorname{KG}(2s+2,2)_{s-\operatorname{stab}}$ are cores, not $\chi$-critical, not hom-idempotent and their chromatic number is equal to $s+2$.\\

\noindent {\bf Keywords}: Cartesian product of graphs, Stable Kneser graphs, Cayley graphs, Hom-idempotent graphs.
\end{abstract}


\section{Introduction}\label{sec:intro}
Let $[n]$ denote the set $\{1,\ldots,n\}$. For positive integers $n \geq 2k$, the Kneser graph $\operatorname{KG}(n,k)$ has as vertices the $k$-subsets of $[n]$ and two vertices are connected by an edge if they have empty intersection.  In a famous paper, Lov\'asz \cite{Lov78} showed that its chromatic number $\chi(K(n,k))$ is equal to $n-2k+2$. After this result, Schrijver \cite{Sch78} proved that the chromatic number remains the same when we consider the subgraph $\operatorname{KG}(n,k)_{2-\operatorname{stab}}$ of $\operatorname{KG}(n,k)$ obtained by restricting the vertex set to the $k$-subsets that are {\em $2$-stable}, that is, that do not contain two consecutive elements of $[n]$ (where $1$ and $n$ are considered also to be consecutive). Schrijver \cite{Sch78} also proved that the $2$-stable Kneser graphs are {\em vertex critical} (or {\em $\chi$-critical}), i.e. the chromatic number of any proper subgraph of $\operatorname{KG}(n,k)_{2-\operatorname{stab}}$ is strictly less than $n-2k+2$; for this reason, the $2$-stable Kneser graphs are also 
 known as the Schrijver graphs. After these general advances, a lot of work has been done concerning properties of Kneser graphs and stable Kneser graphs (see \cite{Frankl85,FF86,LLT98,Braun10,Meun11} and references therein). For example, it is well known that for $n \geq 2k+1$ the automorphism group of the Kneser graph $\operatorname{KG}(n,k)$ is the symmetric group induced by the permutation action on $[n]$; see \cite{GodRoy01} for a textbook account. Concerning the automorphism group of the $s$-stable Kneser graphs $\operatorname{KG}(n,k)_{s-\operatorname{stab}}$, Braun \cite{Braun10} proved that, for $s = 2$, it is isomorphic to the dihedral group of order $2n$. Recently, Torres \cite{Torres15} has generalized Braun's result by proving that for any $s \geq 2$, $\operatorname{Aut}(\operatorname{KG}(n,k)_{s-\operatorname{stab}})$ is indeed isomorphic to the dihedral group of order $2n$.

The {\em cartesian product} $G \Box H$ of two graphs $G$ and $H$ has vertex set $V(G) \times V(H)$, two vertices being joined by an edge whenever they have one coordinate equal and the other adjacent. This product is commutative and associative up to isomorphism.  

In this paper, we assume that the graphs are finite. An {\em homomorphism} from a graph $G$ into a graph $H$, denoted by $G \to H$, is an edge-preserving map from $V(G)$ to $V(H)$. If $H$ is a subgraph of $G$ and $\phi: G \to H$ has the property that $\phi(u) = u$ for every vertex $u$ of $H$, then $\phi$ is called a {\em retraction} and $H$ is called a {\em retract} of $G$. If $\phi: G \to H$ is a bijection and $\phi^{-1}$ is also a homomorphism from $H$ to $G$, then $\phi$ is an {\em isomorphism} and we write $G \simeq H$. In particular, if $G$ is finite, a bijective homomorphism from $G$ to himself is an {\em automorphism}. Two graphs $G$ and $H$ are {\em homomorphically equivalent}, denoted by $G \leftrightarrow H$, if $G \to H$ and $H \to G$. A graph $G$ is called a {\em core} if it has no proper retracts, i.e., any homomorphism $\phi: G \to G$ is an automorphism of $G$. It is well known that any finite graph $G$ is homomorphically equivalent to at least one core $G^{\bullet}$, as can be seen by selecting $G^{\bullet}$ as a retract 
 of $G$ with a minimum number of vertices. In this way, $G^{\bullet}$ is uniquely determined up to isomorphism, and it makes sense to think of it as {\em the core} of $G$. It is widely known that Kneser graphs are cores. Moreover, it is not difficult to deduce that any $\chi$-critical graph is a core. Therefore, any $2$-stable Kneser graph is also a core, because it is $\chi$-critical \cite{Sch78}.

An {\em automorphism} $\phi$ of a graph $G$ is called a {\em shift} of $G$ if $\{u,\phi(u)\} \in E(G)$ for each $u \in V(G)$. In other words, a shift of $G$ maps every vertex to one of its neighbors \cite{LLT98}.

Let $A$ be a group and $S$ a subset of $A$ that is closed under inverses and does not contain the identity. The {\em Cayley gragh} $\operatorname{Cay}(A,S)$ is the graph whose vertex set is $A$, two vertices $u,v$ being joined by an edge if $u^{-1}v \in S$. Cayley graphs of cyclic groups are often called {\em circulants}.

A graph $G$ is said {\em vertex-transitive} if its automorphism group $\operatorname{Aut}(G)$ acts transitively on its vertex-set. It's well known that Cayley graphs and Kneser graphs are vertex-transitive. However, $2$-stable Kneser graphs are not vertex-transitive in general. For example, no automorphism of $\operatorname{KG}(6,2)_{2-\operatorname{stab}}$ sends $\{1,3\}$ to $\{1,4\}$, since $\operatorname{Aut}(\operatorname{KG}(6,2)_{2-\operatorname{stab}})$ is isomorphic to the dihedral group of order $12$ acting on the set $\{1,2,\ldots,6\}$.

We write $G^n$ for the $n$-fold cartesian product of a graph $G$. A graph $G$ is said {\em hom-idempotent} if there is a homomorphism from $G^2$ to $G$, and {\em weakly hom-idempotent} if for some $n \geq 1$ there is a homomorphism from $G^{n+1}$ to $G^n$. Larose et al. \cite{LLT98} showed that the Kneser graphs are not weakly hom-idempotent.  However, the technique used by Larose et al. \cite{LLT98} cannot be extended directly to the $s$-stable Kneser graphs.

A subset $S \subseteq [n]$ is {\em $s$-stable} if any two of its elements are at least "at distance $s$ apart" on the $n$-cycle, that is, if $s \leq |i-j| \leq n-s$ for distinct $i,j \in S$. For $s,k \geq 2$ and $n \geq ks$, the $s$-stable Kneser graph $\operatorname{KG}(n,k)_{s-\operatorname{stab}}$ is the subgraph of $\operatorname{KG}(n,k)$ obtained by restricting the vertex set of $\operatorname{KG}(n,k)$ to the $s$-stable $k$-subsets of $[n]$.

In this paper, we characterize all the shifts of $s$-stable Kneser graphs. As a by-product we show that almost all Schrijver graphs are not weakly hom-idempotent. Moreover, for $s,k\geq 2$, we show that $s$-stable Kneser graphs $\operatorname{KG}(ks+1,k)_{s-\operatorname{stab}}$ are circulant graphs and so hom-idempotent graphs. Finally, we study some properties of the $s$-stable Kneser graph $\operatorname{KG}(2s+2,2)_{s-\operatorname{stab}}$ for $s \geq 3$. We prove that for all $s \geq 3$, the graphs $\operatorname{KG}(2s+2,2)_{s-\operatorname{stab}}$ are cores, not $\chi$-critical and not hom-idempotent. Moreover, we also prove that Meunier's conjecture \cite{Meun11} concerning the chromatic number of $s$-stables Kneser graphs holds for this family of graphs, that is, we prove that $\chi(\operatorname{KG}(2s+2,2)_{s-\operatorname{stab}}) = s+2$. We end our paper with a conjecture  concerning the not hom-idempotence of $s$-stable Kneser graphs.

In the sequel, we will use the term {\em modulo $[n]$} to denote arithmetic operations on the set $[n]$ where $n$ represents the $0$.
\section{Shifts of $s$-stable Kneser graphs}
As we have mentioned in the previous section, Braun \cite{Braun10} and Torres \cite{Torres15} showed that the automorphism group of the $s$-stable Kneser graph $\operatorname{KG}(n,k)_{s-\operatorname{stab}}$ is isomorphic to the dihedral group $D_{2n}$ of order $2n$, where the group isomorphism $\phi:D_{2n}\mapsto\operatorname{Aut}(\operatorname{KG}(n,k)_{s-\operatorname{stab}})$ is such that $\phi(\alpha)(\{i_1,$ $i_2,\ldots,i_k\})=\{\alpha(i_1),\alpha(i_2),\dots,\alpha(i_k)\}$. For convenience, from now on we do not distinguish between elements of $\operatorname{Aut}(\operatorname{KG}(n,k)_{s-\operatorname{stab}})$ and elements of $D_{2n}$.
We denote the elements of $D_{2n}$ as follows (arithmetic operations are taken modulo $[n]$):
\begin{itemize}
\item {\em Rotations}: Let $\sigma^0$ be the identity permutation on $[n]$ and, for $1 \leq i \leq n-1$, let $\sigma^i = \sigma^{i-1}\circ\sigma^1$, where $\sigma^1$ is the circular permutation $(1,2,\ldots,n-1,n)$.
\item {\em Reflexions}:
\begin{itemize}
\item Case $n$ odd. For $1\leq i \leq n$, let $\rho_i $ be the permutation formed by the product of the transpositions $ (i+1,i-1)(i+2,i-2)\ldots(i + \frac{n-1}{2}, i - \frac{n-1}{2})$, where $i$ is a fix point.
\item Case $n$ even. For $1 \leq i \leq \frac{n}{2}$, we have two types of reflexions: let $\rho_i$ be the permutation formed by the product of the transpositions $ (i+1,i-1)(i+2,i-2)\ldots(i + \frac{n}{2} - 1, i - \frac{n}{2}+1)$, where $i$ and $i + \frac{n}{2}$ are fix points; and let $\delta_i$ be the permutation formed by the product of transpositions $(i,i-1)(i+1,i-2)\ldots(i+\frac{n}{2}-1,i-\frac{n}{2})$ without fix point.
\end{itemize}
\end{itemize}

In the following lemmas, we will to characterize all the shifts of stable Kneser graphs.

\begin{lemma}\label{reflexions}
Let $n\geq ks+1$. Then, the reflexions are not shifts of the $s$-stable Kneser graph $\operatorname{KG}(n,k)_{s-\operatorname{stab}}$.
\end{lemma}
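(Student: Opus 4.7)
The plan is, for each reflexion $\alpha$ in $D_{2n}$, to exhibit an $s$-stable $k$-subset $A\subseteq[n]$ with $A\cap\alpha(A)\ne\emptyset$; this forces $\{A,\alpha(A)\}$ not to be an edge of $\operatorname{KG}(n,k)_{s-\operatorname{stab}}$, so $\alpha$ cannot be a shift. I would split on whether $\alpha$ has a fixed point on $[n]$.

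For the reflexions $\rho_i$, which fix $i$ regardless of the parity of $n$, the natural candidate is the evenly spaced set
\[
A=\{i,\,i+s,\,i+2s,\,\ldots,\,i+(k-1)s\}\pmod{[n]}.
\]
The hypothesis $n\ge ks+1$ makes $A$ an $s$-stable $k$-subset, since the only potentially short gap is the wrap-around gap of length $n-(k-1)s\ge s+1$. Because $\rho_i(i)=i$, we have $i\in A\cap\rho_i(A)$, so $\rho_i$ is not a shift.

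For a fixed-point-free reflexion $\delta_i$ (which exists only when $n$ is even), I would use the formula $\delta_i(x)=2i-1-x\pmod{n}$, which shows that the swapped pairs are exactly $\{i+j-1,\,i-j\}$ for $j=1,\ldots,\tfrac{n}{2}$, whose cyclic distances run through the odd integers $1,3,5,\ldots$. Pick $d$ to be the smallest odd integer with $d\ge s$ (so $d=s$ if $s$ is odd and $d=s+1$ if $s$ is even), and let $\{a,b\}$ be the swapped pair realising this cyclic distance; the inequality $n\ge ks+1\ge 2s+1$ guarantees that such a pair exists and that $d\le n-s$. The goal is then to enlarge $\{a,b\}$ to an $s$-stable $k$-subset $A$, because then $\{a,b\}\subseteq A\cap\delta_i(A)$.

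Since $d<2s$, no further element fits in the short arc between $a$ and $b$, so the $k-2$ extra points must be packed into the long arc of length $n-d$. A concrete choice is
\[
A=\{a,\,a+s,\,a+2s,\,\ldots,\,a+(k-2)s,\,b\}
\]
traversing the long arc, producing $k-1$ gaps of size $\ge s$ totalling $n-d$; this requires $n-d\ge(k-1)s$, i.e.\ $n\ge ks$ when $d=s$ and $n\ge ks+1$ when $d=s+1$, so the hypothesis $n\ge ks+1$ suffices in both cases. The main obstacle I foresee is precisely this parity constraint on $d$: swapped pairs of $\delta_i$ have only odd cyclic distances, which forces one unit of slack when $s$ is even and explains why the lemma's hypothesis is $n\ge ks+1$ rather than merely $n\ge ks$.
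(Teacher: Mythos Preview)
Your proof is correct and uses the same split as the paper: for $\rho_i$ the fixed point $i$ immediately yields a non-edge, and for $\delta_i$ one exhibits an $s$-stable $k$-set containing a $\delta_i$-swapped pair. The only real difference is the witness in the $\delta_i$ case. The paper simply writes down $v_i=\{i,i+s,\ldots,i+(k-2)s,\,i-s-1\}$ and observes that the swapped pair $\{i+s,\,i-s-1\}$ lies in $v_i\cap\delta_i(v_i)$; you instead analyse which cyclic distances are realised by swapped pairs (the odd ones), pick the smallest admissible one $d\in\{s,s+1\}$, and extend along the long arc. The paper's route is shorter, but yours is actually more robust: when $k=2$ the paper's $v_i$ reduces to $\{i,\,i-s-1\}$, which does \emph{not} contain $i+s$, so the inclusion $\{i+s,i-s-1\}\subseteq v_i$ claimed there fails in that boundary case, whereas your construction works uniformly for all $k\ge2$. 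One small point worth making explicit in your write-up: the existence of a swapped pair at cyclic distance exactly $d$ requires $d\le n/2$; this follows because $n$ is even and $n\ge ks+1\ge 2s+1$ force $n\ge 2s+2\ge 2d$. Also, after choosing the pair $\{a,b\}$ you should orient it (swap the names if necessary) so that moving from $a$ in the $+$ direction traverses the long arc, so that the formula $A=\{a,a+s,\ldots,a+(k-2)s,b\}$ literally does what you intend.
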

\begin{proof}
Let us consider the following two cases:
\begin{itemize}
\item Case $n$ odd. For each $1 \leq i \leq n$, let $v_i$ be a vertex in $\operatorname{KG}(n,k)_{s-\operatorname{stab}}$ such that $i \in v_i$. Trivially, such vertex $v_i$ always exists. Now, we know that $i$ is a fix point under the permutation $\rho_i$ and thus, $i \in \rho_i(v_i)$ which implies that $\{v_i,\rho_i(v_i)\}$ is not an edge of $\operatorname{KG}(n,k)_{s-\operatorname{stab}}$. Thus, for $1 \leq i \leq n$, $\rho_i$ is not a shift of $\operatorname{KG}(n,k)_{s-\operatorname{stab}}$.
\item Case $n$ even. Analogous to the previous case, we can show that $\rho_i$ is not a shift of $\operatorname{KG}(n,k)_{s-\operatorname{stab}}$, for $1 \leq i \leq \frac{n}{2}$. Now, for each $1 \leq i \leq \frac{n}{2}$, let $v_i = \{i,i+s,i+2s,\ldots,i+(k-2)s,i-s-1\}$. Clearly, $v_i$ is an $s$-stable set, since $i+(k-2)s$ and $i-s-1$ are at least at distance $s$ apart on the $n$-cycle. So, $v_i$ is a vertex of $\operatorname{KG}(n,k)_{s-\operatorname{stab}}$ such that $\{i+s,i-s-1\} \subseteq v_i$. However, $\{i+s,i-s-1\} \subseteq \delta_i(v_i)$ which implies that $\{v_i,\delta_i(v_i)\}$ is not an edge of $\operatorname{KG}(n,k)_{s-\operatorname{stab}}$. Thus, for $1 \leq i \leq \frac{n}{2}$, $\delta_i$ is not a shift of $\operatorname{KG}(n,k)_{s-\operatorname{stab}}$.
\end{itemize}
\end{proof}

\begin{lemma}
\label{lema-shift}
Let $n \geq (k+1)s-1$. Then, the only $2(s-1)$ shifts of the $s$-stable Kneser graph $\operatorname{KG}(n,k)_{s-\operatorname{stab}}$ are the rotations $\sigma^i$ with $i\in\{1,\dots,s-1\}\cup\{n-s+1,\dots,n-1\}$.
\end{lemma}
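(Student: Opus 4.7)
The plan is to reduce to the rotations and then identify precisely which of them are shifts. First I would invoke Lemma~\ref{reflexions}, which applies since $n \geq (k+1)s-1 \geq ks+1$ for $s\geq 2$, to rule out all reflexions; the identity $\sigma^{0}$ is excluded trivially since it fixes every vertex. It remains to decide which of $\sigma^1,\dots,\sigma^{n-1}$ are shifts. The crucial reformulation is that $\sigma^i$ acts on any vertex $v$ by elementwise translation, $\sigma^i(v)=v+i$ (mod $[n]$), so $\sigma^i$ is a shift iff no $s$-stable $k$-subset $v$ contains two elements differing by $i$ modulo $n$. This condition depends only on the cyclic distance $d_i=\min(i,n-i)$. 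The positive direction is then immediate: if $i \in \{1,\dots,s-1\}\cup\{n-s+1,\dots,n-1\}$ then $d_i<s$, so the $s$-stability of $v$ already forbids such a pair and $\sigma^i$ is a shift.

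For the negative direction ($s\leq i\leq n-s$) I would construct an explicit $s$-stable $k$-subset realising a pair at cyclic distance $d_i$. The plan is to encode $s$-stable $k$-subsets by their cyclic gap sequences, i.e.\ cyclic compositions of $n$ into $k$ parts each $\geq s$. Such a subset contains a pair $\{a,a+i\}$ iff its gap composition splits into a run of $m$ parts summing to $i$ and a complementary run of $k-m$ parts summing to $n-i$, for some $1\leq m\leq k-1$. Such an $m$ exists iff $k-\floor{(n-i)/s}\leq m \leq \floor{i/s}$, a nonempty range precisely when
\[
\floor{i/s}+\floor{(n-i)/s}\geq k.
\]
Once a valid $m$ is available, an explicit realising vertex is routine to write down, for instance $\{0,s,\dots,(m-1)s,\,i,\,i+s,\dots,i+(k-m-1)s\}$ with any slack distributed among the two runs so that each gap stays $\geq s$.

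The main obstacle is verifying this key inequality, especially at the tight case $n=(k+1)s-1$. I plan to handle it by writing $i=qs+r$ and $n-i=q's+r'$ with $0\leq r,r'<s$, so that $(q+q')s+(r+r')=n$; a hypothetical $q+q'\leq k-1$ would force $r+r'\geq n-(k-1)s \geq 2s-1$, contradicting $r+r'\leq 2(s-1)$. This yields $\floor{i/s}+\floor{(n-i)/s}=q+q'\geq k$ and completes the argument, leaving exactly the $2(s-1)$ rotations listed in the statement as the shifts of $\operatorname{KG}(n,k)_{s-\operatorname{stab}}$.
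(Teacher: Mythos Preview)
Your proposal is correct and follows the same skeleton as the paper's proof: invoke Lemma~\ref{reflexions} to discard reflexions, observe that $\sigma^i$ with $d_i<s$ is automatically a shift by $s$-stability, and for each remaining $i\in\{s,\dots,n-s\}$ exhibit an $s$-stable $k$-set containing both some element and its shift by $i$.

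The difference is only in how the witness is produced. The paper splits at $i=ks$ and writes down the vertex directly: for $s\le i\le ks-1$ with $js\le i<(j{+}1)s$ it takes $v_i=\{1,1+s,\dots,1+(j-1)s\}\cup\{1+i,\dots,1+i+(k-j-1)s\}$, and for $ks\le i\le n-s$ it takes $v_i=\{1,1+s,\dots,1+(k-2)s,1+i\}$. Your gap-composition reformulation and the inequality $\floor{i/s}+\floor{(n-i)/s}\ge k$ recover exactly these vertices (with $m=\floor{i/s}$ in the first range and $m=k-1$ in the second), so your route is a more systematic derivation of the same construction rather than a genuinely different argument. The payoff of your version is that the tightness of the hypothesis $n\ge (k+1)s-1$ becomes visible: it is precisely what forces $r+r'\ge 2s-1$ and hence the contradiction.
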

\begin{proof}
From Lemma \ref{reflexions} we only need to study the rotations $\sigma^i$ for $i\in [n-1]$. It is very easy to deduce that the circular permutations $\sigma^i$ with $i\in\{1,\dots,s-1\}\cup\{n-s+1,\dots,n-1\}$ are shifts of the graph $\operatorname{KG}(n,k)_{s-\operatorname{stab}}$. In order to prove that they are the only $2(s-1)$ shifts of $\operatorname{KG}(n,k)_{s-\operatorname{stab}}$, we will proceed by cases. The arithmetic operations are taken modulo $[n]$.
Clearly, the identity permutation $\sigma^0$ is not a shift. Now, we claim that for each $i \in \{s,s+1,\ldots,n-s\}$, there exists a vertex $v_i$ in $\operatorname{KG}(n,k)_{s-\operatorname{stab}}$ such that $\{1,i+1\} \subseteq v_i$. In fact, vertex $v_i$ can be computed as follows:

\begin{itemize}
\item If $s \leq i \leq ks-1$, let $j$ such that $js\leq i\leq (j+1)s-1$ and $v_i=\{1+ts:t=0,\dots,j-1\}\cup\{1+i+ts:t=0,\dots,k-j-1\}$.

\item If $ks \leq i \leq n-s$ then, set $v_i = \{1,1+s,1+2s,\ldots,1+(k-2)s,1+i\}$.
\end{itemize}
Now, for each $s \leq i \leq n-s$, we know that $\sigma^i(1) = 1+i$ and therefore, $1+i \in \sigma^i(v_i)$ which implies that $\{v_i, \sigma^i(v_i)\}$ is not an edge of $\operatorname{KG}(n,k)_{s-\operatorname{stab}}$. Thus, for $s \leq i \leq n-s$, $\sigma^i$ is not a shift of $\operatorname{KG}(n,k)_{s-\operatorname{stab}}$.
\end{proof}

In the following lemma we consider $[0]=\emptyset$. 

\begin{lemma}\label{lema-shift2}
Let $sk+1\leq n\leq s(k+1)-2$ and $r=n-sk$. Then, the shifts of the $s$-stable Kneser graph $\operatorname{KG}(n,k)_{s-\operatorname{stab}}$ are the rotations $\sigma^i$ for $i\in\{1,\dots,s-1\}\cup\{n-s+1,\dots,n-1\}\cup\left[\bigcup_{m\in [k-2]}\{ms+r+1,\dots,(m+1)s-1\}\right]$.
\end{lemma}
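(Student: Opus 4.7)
The plan is to reduce the problem to a combinatorial question about gap sequences in $s$-stable $k$-subsets. Since $n\geq sk+1$, Lemma~\ref{reflexions} rules out every reflexion as a shift, so only rotations $\sigma^i$ with $i\in[n-1]$ need to be analyzed. By definition, $\sigma^i$ fails to be a shift iff some $s$-stable $k$-subset $v$ contains two elements $a,b$ with $a\equiv b+i\pmod n$; equivalently, iff $i$ arises as a nonzero element of the difference set $\{a-b \bmod n : a,b\in v, a\neq b\}$ for some vertex $v$.

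For $v=\{a_1<\cdots<a_k\}$, introduce the gap sequence $(g_1,\dots,g_k)$ by $g_j=a_{j+1}-a_j$ for $j<k$ and $g_k=n-a_k+a_1$. Then $g_j\geq s$, $\sum_j g_j=n=sk+r$, and conversely any such sequence together with a choice of $a_1\in[n]$ yields a valid $s$-stable $k$-subset. Every difference $a-b\bmod n$ with $a,b\in v$ distinct equals either $\sum_{j=l}^{l+m-1}g_j$ or $n-\sum_{j=l}^{l+m-1}g_j$ for some starting index $l$ and some $1\leq m\leq k-1$. The core combinatorial claim is that, as the gap sequence varies, the set of possible sums of $m$ consecutive gaps is exactly $\{ms, ms+1,\dots,ms+r\}$: the bounds are immediate ($g_j\geq s$ gives the minimum, and the remaining $k-m$ gaps needing total at least $(k-m)s$ gives the maximum), and each value $ms+t$ with $0\leq t\leq r$ is realized by the explicit gap sequence $(s+t,s,\dots,s,s+(r-t),s,\dots,s)$ with the nonstandard entries in positions $1$ and $m+1$.

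Setting $D:=\bigcup_{m=1}^{k-1}\{ms,\dots,ms+r\}$, it follows that $\sigma^i$ is not a shift iff $i\in D$ or $n-i\in D$; but $n-(ms+t)=s(k-m)+(r-t)$ shows $n-D=D$, so the shifts are precisely $\{\sigma^i: i\in[n-1]\setminus D\}$. The hypothesis $r\leq s-2$ (equivalent to $n\leq s(k+1)-2$) ensures that the blocks $\{ms,\dots,ms+r\}$ and $\{(m+1)s,\dots,(m+1)s+r\}$ are disjoint, so the complement of $D$ inside $[1,sk+r-1]$ splits as the initial interval $\{1,\dots,s-1\}$, the intervening intervals $\{ms+r+1,\dots,(m+1)s-1\}$ for $m\in[k-2]$ (empty when $k=2$, matching the convention $[0]=\emptyset$), and the final interval $\{(k-1)s+r+1,\dots,n-1\}=\{n-s+1,\dots,n-1\}$. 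The only step requiring real work is the achievability claim for $m$-gap sums; the rest is interval arithmetic.
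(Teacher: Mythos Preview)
Your proof is correct and follows the same overall strategy as the paper: invoke Lemma~\ref{reflexions} to discard reflexions, then determine for each rotation $\sigma^i$ whether $i$ can occur as a cyclic difference inside some $s$-stable $k$-set. The paper handles the two directions separately---a counting bound $|v|\le\lfloor i/s\rfloor+\lfloor (n-i)/s\rfloor\le k-1$ to certify that $\sigma^i$ is a shift when $i\in T$, and an explicit vertex $v=\{1,1+s+t,\dots,1+(k-1)s+t\}$ to witness failure when $i\in F_d$---whereas your gap-sequence reformulation unifies both: the range of sums of $m$ cyclically consecutive gaps is exactly $\{ms,\dots,ms+r\}$, giving the non-shift set $D$ directly and its complement $T$ by interval arithmetic. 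Your explicit realizing sequence $(s+t,s,\dots,s,s+(r-t),s,\dots,s)$ specializes (for the choice of special positions $1$ and $k$) to the paper's witness vertex, and your upper bound on $m$-gap sums is the gap-sequence translation of the paper's floor inequality; so the two arguments have the same content, with yours packaged more uniformly.
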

\begin{proof}
Let $T=\{1,\dots,s-1\}\cup\{n-s+1,\dots,n-1\}\cup\left[\bigcup_{m\in [k-2]}\{ms+r+1,\dots,\right.$ $\left. (m+1)s-1\}\right]$.
From Lemma \ref{reflexions} we know that the reflexions are not shifts. It is not hard to see that the circular permutations $\sigma^i$ with $i\in\{1,\dots,s-1\}\cup\{n-s+1,\dots,n-1\}$ are shifts of the graph $\operatorname{KG}(n,k)_{s-\operatorname{stab}}$. So, let $i\in\bigcup_{m\in [k-2]}\{ms+r+1,\dots,(m+1)s-1\}$. If $v,\sigma^i(v)$ are not adjacent for some vertex $v$, then there exist $j\in v\cap\sigma^i(v)$. Therefore, $\{j,j-i\}\subset v$. From the symmetry of $\operatorname{KG}(n,k)_{s-\operatorname{stab}}$, w.l.o.g. we assume that $\{1+i,1\}\subset v$. Notice that $|v\cap [i]|\leq\left\lfloor \frac{i}{s}\right\rfloor$ and $$|v\cap \{1+i,\dots,n\}|\leq\left\lfloor \frac{n-i}{s}\right\rfloor.$$

Consider $m'\in [k-2]$ such that $i\in\{m's+r+1,\dots,(m'+1)s-1\}$. Then,
\begin{itemize}
\item
$\left\lfloor \frac{i}{s}\right\rfloor\leq\left\lfloor \frac{(m'+1)s-1}{s}\right\rfloor =m'$.
\item
$\left\lfloor \frac{n-i}{s}\right\rfloor\leq\left\lfloor \frac{n-(m's+r+1)}{s}\right\rfloor \leq
\left\lfloor \frac{n-r-1}{s}\right\rfloor -m' = \left\lfloor \frac{n-n+sk-1}{s}\right\rfloor -m'=k-1-m'$.
\end{itemize}

Thus, $|v|\leq\left\lfloor \frac{i}{s}\right\rfloor + \left\lfloor \frac{n-i}{s}\right\rfloor\leq k-1$ which is a contradiction. Therefore $\sigma^i$ is a shift.

Now, let us see that if $i\notin T$, $\sigma^i$ is not a shift of $\operatorname{KG}(n,k)_{s-\operatorname{stab}}$.

Let $F_d=\{ds,ds+1,\dots,ds+r\}$ for $d\in [k-1]$ and $F=\bigcup_{d=1}^{k-1}F_d$. Observe that $F=[n]-T$.

Let $i\in F_d$ for some $d\in [k-1]$. Consider $t=i-ds$ and $v=\{1,1+s+t,1+2s+t,\dots,1+(k-1)s+t\}$. Then $v$ is a vertex of $\operatorname{KG}(n,k)_{s-\operatorname{stab}}$ and $\{v,\sigma^i(v)\}$ is not an edge of $\operatorname{KG}(n,k)_{s-\operatorname{stab}}$ since $\sigma^i(1)=1+i=1+ds+t$ belongs to $v$. Therefore, if $i\in F$ the rotations $\sigma^i$ is not a shift of $\operatorname{KG}(n,k)_{s-\operatorname{stab}}$ and the result follows.
\end{proof}

As a by-product of these results, in the following section we prove that if $n\geq 2k+2$, the Schrijver graphs $\operatorname{KG}(n,k)_{2-\operatorname{stab}}$ are not weakly hom-idempotent.

\section{Almost all Schrijver graphs are not weakly hom-idempotent}\label{Schrijver}

Given a graph $G$, the set of all shifts of $G$ is denoted by $S_G$.
Larose et al. \cite{LLT98} showed the following useful results:

\begin{proposition}[Proposition 2.3 in \cite{LLT98}]
\label{prop-core}
A graph $G$ is hom-idempotent if and only if $G \leftrightarrow \operatorname{Cay}(\operatorname{Aut}(G^{\bullet}),S_{G^{\bullet}})$.
\end{proposition}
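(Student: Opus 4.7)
The plan is to first observe that hom-idempotence is preserved under homomorphic equivalence: given a homomorphism $h:(G')^2\to G'$ together with homomorphisms $G\to G'$ and $G'\to G$, one composes $G^2\to(G')^2\to G'\to G$. Consequently both sides of the claimed equivalence depend only on $G^\bullet$, and it suffices to prove the statement under the additional assumption $G=G^\bullet$. The key structural observation driving everything is that $S_G$ is closed under conjugation by $\operatorname{Aut}(G)$: if $\sigma\in S_G$ and $\phi\in\operatorname{Aut}(G)$, then for every vertex $u$ the pair $\{\phi^{-1}(u),\sigma\phi^{-1}(u)\}$ is an edge, so applying the automorphism $\phi$ shows $\phi\sigma\phi^{-1}$ is again a shift.

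For the $(\Leftarrow)$ direction, I would show that $H:=\operatorname{Cay}(\operatorname{Aut}(G),S_G)$ is itself hom-idempotent, and then transfer via the hypothesis $G\leftrightarrow H$ by composition to conclude that $G$ is hom-idempotent. The witness is the group multiplication $\mu:H^2\to H$, $(a,b)\mapsto ab$: on the two types of edges of $H^2$ the differences work out to $(ab)^{-1}(ab')=b^{-1}b'$ (immediate) and $(ab)^{-1}(a'b)=b^{-1}(a^{-1}a')b$, which lies in $S_G$ precisely because of conjugation invariance.

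For the $(\Rightarrow)$ direction, assume a homomorphism $h:G^2\to G$ is given. For each vertex $v$, the slice $\phi_v:u\mapsto h(v,u)$ is an endomorphism of $G$, hence an automorphism since $G$ is a core. I would then verify that $\Phi:v\mapsto\phi_v$ is a graph homomorphism $G\to H$: if $v\sim v'$ in $G$ then $\phi_v(u)\sim\phi_{v'}(u)$ for every $u$ (because $(v,u)\sim(v',u)$ in $G^2$), so $\phi_v^{-1}\phi_{v'}$ sends every vertex to a neearest neighbor and is a shift; it is nontrivial because $\phi_v(v)=h(v,v)$ and $\phi_{v'}(v)=h(v',v)$ are adjacent, hence distinct. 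Conversely, fixing any $v_0\in V(G)$, the evaluation $\Psi:\psi\mapsto\psi(v_0)$ gives a homomorphism $H\to G$, since $\psi^{-1}\psi'\in S_G$ immediately yields $\psi(v_0)\sim\psi(\psi^{-1}\psi'(v_0))=\psi'(v_0)$.

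The main delicate point is the interplay of the two ingredients: the core hypothesis on $G^\bullet$ is exactly what upgrades the slice endomorphisms $\phi_v$ to \emph{automorphisms} (so that $\Phi$ lands in $\operatorname{Aut}(G^\bullet)$), while conjugation invariance of $S_{G^\bullet}$ is exactly what makes group multiplication a homomorphism from the square of the Cayley graph back to itself. Neither fact is hard in isolation, but both are needed simultaneously, and the rest of the argument amounts to bookkeeping about translating properties between $G$ and $G^\bullet$.
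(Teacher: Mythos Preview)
The paper does not prove this proposition at all: it is quoted verbatim as Proposition~2.3 of Larose, Laviolette and Tardif \cite{LLT98} and used as a black box. So there is no ``paper's own proof'' to compare against.

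That said, your argument is correct and is essentially the standard proof. A couple of minor remarks. First, your check that $\phi_v^{-1}\phi_{v'}$ is nontrivial is redundant: you have already shown it sends every vertex to a neighbour, and in a loopless graph this already forces it to have no fixed point. Second, when you reduce to $G=G^\bullet$ you implicitly use that the Cartesian product is functorial for homomorphisms (so that $G\leftrightarrow G^\bullet$ implies $G^2\leftrightarrow (G^\bullet)^2$); this is immediate from the definition but worth stating. Finally, note a small typo: ``neearest'' should be ``nearest'' (and really you mean ``a neighbour'', not ``a nearest neighbour'').
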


\begin{theorem}[Theorem 5.1 in \cite{LLT98}]
\label{teo-critical}
Let $G$ be a $\chi$-critical graph. Then $G$ is weakly hom-idempotent if and only if it is hom-idempotent. 
\end{theorem}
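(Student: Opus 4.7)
The forward direction is trivial: a homomorphism $G^2 \to G$ is exactly the case $n=1$ of the definition of weak hom-idempotence, so any hom-idempotent graph is weakly hom-idempotent, with no hypothesis on $G$ needed.

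For the converse, assume $G$ is $\chi$-critical and that there exists some $n \geq 1$ with a homomorphism $\phi: G^{n+1} \to G^n$; I want to produce $G^2 \to G$. The first step is standard: $G$ is a core. Indeed, since $G$ is finite, any non-automorphism endomorphism of $G$ is non-injective, so its image is a proper subgraph of $G$ that still admits a homomorphism from $G$ and therefore still has chromatic number $\chi(G)$, contradicting vertex-criticality. Moreover, by Sabidussi's theorem $\chi(G^m) = \chi(G)$ for all $m$, and the canonical fiber embedding $G^m \hookrightarrow G^{m+1}$ (fixing the last coordinate to some $g_0$) is always a homomorphism. In particular, combined with $\phi$ we obtain $G^n \leftrightarrow G^{n+1}$, so these share a common core $C$. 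Iterating $\phi$ with $\mathrm{id}_G \,\Box\, \phi$ gives homomorphisms $G^{n+m} \to G^n$ for all $m \geq 0$, so the entire tail $G^n, G^{n+1}, G^{n+2}, \ldots$ is hom-equivalent and each $G^m$ ($m \geq n$) has core $C$ as well.

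The heart of the argument is to show $C \cong G$. Granted this, $G^{n+1} \leftrightarrow G$, so $G^{n+1} \to G$, and pre-composing with the fiber embedding $G^2 \hookrightarrow G^{n+1}$ yields $G^2 \to G$, as desired. The natural route to $C \cong G$ is via Proposition 2.3: since $G$ is a core we have $G^{\bullet} = G$, and the goal becomes $G \leftrightarrow \operatorname{Cay}(\operatorname{Aut}(G), S_G)$. To build the needed homomorphism $\operatorname{Cay}(\operatorname{Aut}(G), S_G) \to G$ from the data of $\phi$, I would exploit that each $g \in V(G)$ induces an endomorphism $\phi_g := \phi(g, \cdot): G^n \to G^n$, and the map $g \mapsto \phi_g$ is itself a homomorphism $G \to \operatorname{End}(G^n)$ whose image lies in the shifts of $G^n$ once we pass to the stabilized core $C$; this should let me transfer the $\operatorname{Aut}(G)$-shift structure on $G$ across to $C$ and identify them.

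The step I expect to be the main obstacle is precisely this identification $C \cong G$: vertex-criticality is a property of proper subgraphs of $G$, whereas $C$ is only naturally a subgraph of $G^n$, not of $G$. Bridging this gap requires showing that the rich endomorphism structure of the stabilized power $G^m$ ($m \geq n$) — in particular, the family $\{\phi_g\}$ and its iterates — collapses onto $G$ itself under $\chi$-criticality. All the other steps (embedding into powers, stabilization of the sequence of cores, the final assembly $G^2 \to G^{n+1} \to G$) are routine once $C \cong G$ is in hand.
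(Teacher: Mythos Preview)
The paper does not prove this theorem: it is quoted verbatim as Theorem~5.1 of \cite{LLT98} and used as a black box. So there is no ``paper's own proof'' to compare against; I can only evaluate your sketch on its merits.

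Your setup is sound: the trivial direction, the observation that a $\chi$-critical graph is a core, Sabidussi's $\chi(G^m)=\chi(G)$, the stabilisation $(G^m)^\bullet\cong C$ for all $m\ge n$, and the reduction ``$C\cong G\Rightarrow G^2\to G$'' are all correct. But from there the argument does not close, and two concrete problems stand out.

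First, the appeal to Proposition~\ref{prop-core} is confused. You write that ``the natural route to $C\cong G$'' is to prove $G\leftrightarrow\operatorname{Cay}(\operatorname{Aut}(G),S_G)$, but by Proposition~\ref{prop-core} that statement is \emph{equivalent} to hom-idempotence of $G$; you have circled back to the original goal rather than reduced it. Worse, the map you then set out to construct is $\operatorname{Cay}(\operatorname{Aut}(G),S_G)\to G$, which is the wrong direction: for any core $G$ that map always exists (send $\alpha\mapsto\alpha(v_0)$ for a fixed vertex $v_0$), and indeed the present paper disproves hom-idempotence of Schrijver graphs precisely by blocking the \emph{other} direction $G\to\operatorname{Cay}(\operatorname{Aut}(G),S_G)$ (Proposition~\ref{prop-no-hom}). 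So the homomorphism you propose to build carries no information.

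Second, and more structurally, after the first paragraph you never use $\chi$-criticality again except through its corollary ``$G$ is a core''. The heuristic with the family $\phi_g=\phi(g,\cdot):G^n\to G^n$ is reasonable---adjacent $g,g'$ do give $\phi_g(x)\sim\phi_{g'}(x)$ for all $x$---but nothing in your outline forces these endomorphisms of $G^n$ down to automorphisms of $G$ itself, and that collapse is exactly where the hypothesis beyond ``core'' has to enter. As written, the sketch would apply verbatim to any core $G$, and the theorem is not stated in that generality in \cite{LLT98}. You acknowledge the gap at $C\cong G$; what is missing is a mechanism that genuinely consumes vertex-criticality (not merely ``no proper retracts'') to pin the stabilised core back onto $G$.
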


\begin{proposition}
\label{prop-no-hom}
Let $n \geq 2k+2$ and let $G$ denote the graph $\operatorname{KG}(n,k)_{2-\operatorname{stab}}$. Then, $G \not\to \operatorname{Cay}(\operatorname{Aut}(G), S_G)$.
\end{proposition}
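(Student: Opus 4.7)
My plan is to make the Cayley graph $\operatorname{Cay}(\operatorname{Aut}(G), S_G)$ completely explicit and then rule out the hypothetical homomorphism by a chromatic-number comparison.

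First I would apply Lemma \ref{lema-shift} with $s=2$. Its hypothesis $n \geq (k+1)\cdot 2 - 1 = 2k+1$ follows from $n \geq 2k+2$, so the only shifts of $G$ are the two rotations $\sigma^{1}$ and $\sigma^{n-1}=(\sigma^{1})^{-1}$. Hence $S_G=\{\sigma,\sigma^{-1}\}$, where $\sigma:=\sigma^{1}$ generates the cyclic rotation subgroup of $D_{2n}\cong\operatorname{Aut}(G)$.

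Next I would describe the resulting Cayley graph. Since $|S_G|=2$, it is $2$-regular. Because left-multiplying a rotation by $\sigma^{\pm 1}$ stays among the rotations, and left-multiplying a reflection by $\sigma^{\pm 1}$ stays among the reflections, the Cayley graph splits into two connected components, one on the $n$ rotations and one on the $n$ reflections, each isomorphic to the $n$-cycle $C_n$. Consequently
\begin{equation*}
\operatorname{Cay}(\operatorname{Aut}(G), S_G) \cong C_n \sqcup C_n,
\end{equation*}
and in particular its chromatic number is at most $3$.

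Finally, by Schrijver's theorem $\chi(G)=n-2k+2 \geq 4$ since $n\geq 2k+2$. Any homomorphism $G \to C_n \sqcup C_n$ sends each connected component of $G$ into a single copy of $C_n$, so every component of $G$ admits a proper $3$-colouring, forcing $\chi(G)\leq 3$, a contradiction. The main conceptual step is realising that Lemma \ref{lema-shift} has already cut the shift set down to two mutually inverse rotations; once that is noted, the structural description of the Cayley graph as two disjoint $n$-cycles is immediate, and the proposition reduces to the chromatic inequality $n-2k+2 \geq 4$. I do not foresee any serious obstacle beyond being careful to argue per connected component rather than assuming $G$ is connected.
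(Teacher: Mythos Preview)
Your proposal is correct and follows essentially the same route as the paper: invoke Lemma~\ref{lema-shift} to get $S_G=\{\sigma,\sigma^{-1}\}$, observe that $\operatorname{Cay}(D_{2n},\{\sigma,\sigma^{-1}\})$ is a disjoint union of two $n$-cycles (hence $3$-colourable), and conclude via the chromatic lower bound $\chi(G)=n-2k+2\geq 4$. One small wording point: with the paper's convention $u\sim v\Leftrightarrow u^{-1}v\in S$, adjacency is by \emph{right} multiplication, not left---but since the rotation subgroup is normal in $D_{2n}$, your coset argument goes through unchanged.
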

\begin{proof}
We know that the automorphism group of the graph $\operatorname{KG}(n,k)_{2-\operatorname{stab}}$ is the dihedral group $D_{2n}$ on $[n]$. Moreover, by Lemma \ref{lema-shift}, we known that the only two shifts of $\operatorname{KG}(n,k)_{2-\operatorname{stab}}$ are the circular permutations $\sigma$ and $\sigma^{-1}$.
Therefore the Cayley graph $\operatorname{Cay}(D_{2n}, \{\sigma, \sigma^{-1}\})$ is a disjoint union of two n-cycles. This implies that $2 \leq \chi(\operatorname{Cay}(D_{2n}, \{\sigma, \sigma^{-1}\})) \leq 3$.
Thus $\operatorname{KG}(n,k)_{2-\operatorname{stab}} \not\to \operatorname{Cay}(D_{2n}, \{\sigma, \sigma^{-1}\})$.
\end{proof}

As mentioned in the previous section, we know that any $2$-stable Kneser graph is a core. Therefore, by Propositions \ref{prop-core} and \ref{prop-no-hom}, and by Theorem \ref{teo-critical}, we have the following result.

\begin{theorem}
\label{teo-no-weak}
For any $n \geq 2k+2$, the $2$-stable Kneser graphs $\operatorname{KG}(n,k)_{2-\operatorname{stab}}$ are not weakly hom-idempotent.
\end{theorem}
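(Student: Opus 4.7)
The plan is to simply assemble the three ingredients already in place: the $\chi$-criticality of Schrijver graphs (due to Schrijver), Proposition \ref{prop-core}, Proposition \ref{prop-no-hom}, and Theorem \ref{teo-critical}. Since the paper explicitly reduces the problem to these facts, there is no real obstacle — the proof is a short logical chain rather than a genuine calculation.

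First I would recall that $G := \operatorname{KG}(n,k)_{2-\operatorname{stab}}$ is $\chi$-critical by Schrijver's theorem \cite{Sch78}, and that every $\chi$-critical graph is a core (as already noted in the introduction). Hence $G^{\bullet} = G$, so $\operatorname{Aut}(G^{\bullet}) = \operatorname{Aut}(G)$ and $S_{G^{\bullet}} = S_G$. This is what lets us identify the Cayley graph in Proposition \ref{prop-core} with the one considered in Proposition \ref{prop-no-hom}.

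Next I would apply Theorem \ref{teo-critical}: since $G$ is $\chi$-critical, $G$ is weakly hom-idempotent if and only if it is hom-idempotent. Thus it suffices to show that $G$ is not hom-idempotent. By Proposition \ref{prop-core}, $G$ is hom-idempotent if and only if $G \leftrightarrow \operatorname{Cay}(\operatorname{Aut}(G),S_G)$, which in particular requires $G \to \operatorname{Cay}(\operatorname{Aut}(G),S_G)$. But Proposition \ref{prop-no-hom} exactly rules this out for $n \geq 2k+2$.

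Concatenating these implications yields the theorem. The only thing to be careful with is citing the right dependencies and making the identification $G^{\bullet}=G$ explicit so that Propositions \ref{prop-core} and \ref{prop-no-hom} match up; there is no new combinatorial content to supply.
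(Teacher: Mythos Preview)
Your proposal is correct and follows exactly the paper's own argument: the theorem is stated immediately after Propositions~\ref{prop-core} and~\ref{prop-no-hom} and Theorem~\ref{teo-critical}, and the paper simply notes that $\operatorname{KG}(n,k)_{2-\operatorname{stab}}$ is a core (being $\chi$-critical) and invokes these three results. Your only addition is making the identification $G^{\bullet}=G$ explicit, which is precisely the bridge the paper leaves implicit.
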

\section{$s$-stable Kneser graphs $\operatorname{KG}(ks+1,k)_{s-\operatorname{stab}}$}
Let $\overline{G}$ denote the complement graph of the graph $G$, i.e. $\overline{G}$ has the same vertex set of $G$ and two vertices are adjacent in $\overline{G}$ if and only if they are not adjacent in $G$. Let $p$ be a positive integer. The $p$th power of a graph $G$, that we denoted by $G^{(p)}$, is the graph having the same vertex set as $G$ and where two vertices are adjacent in $G^{(p)}$ if the distance between them in $G$ is at most equal to $p$, where the distance of two vertices in a graph $G$ is the number of edges on the shortest path connecting them.

Let $n \geq 2k$ be positive integers. The Cayley graphs $\operatorname{Cay}(\mathbb{Z}_n,\{k,k+1,\ldots,n-k\})$, that we denoted by $G(n,k)$, are known as {\em circular graphs} \cite{Vince88,HT97}, where $\mathbb{Z}_n$ denote the cyclic group of order $n$. It is well known that the Kneser graph $\operatorname{KG}(n,k)$ contains an induced subgraph isomorphic to $G(n,k)$. In fact, let $C(n,k)$ be the subgraph of $\operatorname{KG}(n,k)$ obtained by restricting the vertex set of $\operatorname{KG}(n,k)$ to the shifts modulo $[n]$ of the $k$-subset $\{1,2,\ldots,k\}$, that is, $\{1,2,\ldots,k\}, \{2,3,\ldots,k+1\},\ldots,\{n,1,2,\ldots,k-1\}$. Define $\phi : G(n,k) \to C(n,k)$ by putting $\phi(u) = \{u+1,u+2,\ldots,u+k\}$ where the arithmetic operations are taken modulo $[n]$. Clearly, $\phi$ is a graph isomorphism. Notice also that the graph $G(n,k)$ is isomorphic to the graph $\overline{C_n^{(k-1)}}$, i.e. the complement graph of the $(k-1)$th power of a cycle $C_n$. Vince \cite{Vince88} has shown that $\chi(G(n,k)) = \lceil \frac
 {n}{k} \rceil$. \\

In the remainder of this section, we will always assume w.l.o.g. that any vertex $v=\{v_1,v_2,\ldots,v_k\}$ of the $s$-stable Kneser graph $\operatorname{KG}(ks+1,k)_{s-\operatorname{stab}}$ is such that $v_1 < v_2 < \ldots < v_k$, where $s,k \geq 2$. For $i\in [k-1]$, let $l_i(v)=v_{i+1}-v_i$ and $l_k(v)=v_1+(ks+1)-v_k$.
If $C$ is the cycle on $ks+1$ points labeled by integers $1,2,\ldots,ks+1$ in the clockwise direction and $v=\{v_1,v_2,\ldots,v_k\}$ is a vertex of the $s$-stable Kneser graph $\operatorname{KG}(ks+1,k)_{s-\operatorname{stab}}$, then $l_i(v)$ gives the distance in the clockwise direction between $v_i$ and $v_{i+1}$ in $C$.

\begin{lemma}
\label{lema-pareja}
Let $s,k \geq 2$ and let $v=\{v_1,v_2,\ldots,v_k\}$ be a vertex of $\operatorname{KG}(ks+1,k)_{s-\operatorname{stab}}$. Then, $l_i(v)\in\{s,s+1\}$ for all $i\in [k]$. Moreover, there exists exactly one $i'\in [k]$ such that $l_{i'}(v)=s+1$.
\end{lemma}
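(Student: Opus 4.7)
The plan is to treat the values $l_1(v),\ldots,l_k(v)$ as the gap lengths of a partition of the cycle $C$ on $ks+1$ points, and then use two elementary constraints: their sum is fixed, and each is bounded below by the $s$-stability condition.

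First, I would observe that the $l_i(v)$ add up to $ks+1$. Indeed, by definition $l_i(v) = v_{i+1}-v_i$ for $i\in[k-1]$ and $l_k(v) = v_1 + (ks+1) - v_k$, so the telescoping sum gives
\[
\sum_{i=1}^{k} l_i(v) = (v_k-v_1) + (v_1 + (ks+1) - v_k) = ks+1.
\]
Next, since $v$ is an $s$-stable subset of $[ks+1]$, any two distinct elements are at cyclic distance at least $s$ on $C$; applied to the consecutive pairs $(v_i,v_{i+1})$ and $(v_k,v_1)$ this gives $l_i(v)\geq s$ for every $i\in[k]$.

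Now the first claim follows at once: if some $l_{i^*}(v)\geq s+2$, then
\[
\sum_{i=1}^{k} l_i(v) \geq (k-1)s + (s+2) = ks+2 > ks+1,
\]
contradicting the identity above. Hence $l_i(v)\in\{s,s+1\}$ for all $i\in[k]$. For the second claim, let $j$ be the number of indices $i\in[k]$ with $l_i(v)=s+1$; then
\[
ks+1 \;=\; \sum_{i=1}^{k} l_i(v) \;=\; j(s+1) + (k-j)s \;=\; ks + j,
\]
so $j=1$, meaning exactly one index $i'\in[k]$ satisfies $l_{i'}(v)=s+1$.

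There is no real obstacle here; the whole argument is a counting identity on cyclic gaps, and the non-trivial hypothesis $n = ks+1$ is used precisely to make the sum $ks+1$ so tight that at most one gap can exceed $s$.
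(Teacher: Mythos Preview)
Your argument is correct and is exactly the approach the paper takes: use $l_i(v)\geq s$ from $s$-stability together with the telescoping identity $\sum_{i=1}^{k} l_i(v) = ks+1$ to force each gap into $\{s,s+1\}$ with precisely one equal to $s+1$. The paper's own proof is a two-line version of what you wrote.
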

\begin{proof}
By definition, $l_i(v)\geq s$ for any $i\in [k]$. The result follows from the fact that $\sum_{i=1}^kl_i(v)=ks+1$.
\end{proof}

\begin{lemma}
\label{lema-numero-vertices}
Let $s,k \geq 2$. The number of vertices of the graph $\operatorname{KG}(ks+1,k)_{s-\operatorname{stab}}$ is equal to $ks+1$.
\end{lemma}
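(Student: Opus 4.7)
My plan is to parameterize the vertices of $\operatorname{KG}(ks+1,k)_{s-\operatorname{stab}}$ via Lemma~\ref{lema-pareja}: every vertex $v=\{v_1<v_2<\cdots<v_k\}$ has $k-1$ gaps equal to $s$ and exactly one ``long'' gap $l_{i'(v)}=s+1$, with $i'(v)\in[k]$ uniquely determined. Hence $v$ is recovered completely from the pair $(v_1,i'(v))$ through
\[
v_j=v_1+(j-1)s+\mathbf{1}[j>i'(v)], \qquad j\in[k].
\]
I would then count the pairs $(v_1,i'(v))$ that yield $v\subseteq[ks+1]$. Since $v_1\ge 1$ is automatic, the only additional requirement is $v_k\le ks+1$.

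The count splits cleanly according to whether the long gap is the ``wrap-around'' one:
\begin{itemize}
\item If $i'(v)=k$ then $v_k=v_1+(k-1)s$, so $v_k\le ks+1$ gives $v_1\in\{1,\ldots,s+1\}$: $s+1$ vertices.
\item If $i'(v)\in[k-1]$ then $v_k=v_1+(k-1)s+1$, so $v_1\in\{1,\ldots,s\}$; summing over the $k-1$ choices of $i'(v)$ yields $(k-1)s$ vertices.
\end{itemize}
Adding, $(s+1)+(k-1)s=ks+1$, as required.

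The main thing to verify is that the parameterization is a bijection onto the vertex set: Lemma~\ref{lema-pareja} supplies the forward map together with its injectivity, while for surjectivity one checks that each pair indeed produces a bona fide $s$-stable set — the linear gaps are $s$ or $s+1$ by construction, and the wrap-around gap $v_1+(ks+1)-v_k$ equals $s+1$ in the first case and $s$ in the second, both $\ge s$. I do not anticipate any deeper obstacle, as Lemma~\ref{lema-pareja} has already done the structural work. A more conceptual alternative would be to let $\mathbb{Z}/(ks+1)\mathbb{Z}$ act on the vertex set by rotation: the action is transitive (align the unique long gap between any two vertices) and free (a non-trivial stabilizer would force a second long gap), so the orbit-stabilizer theorem delivers $|V|=ks+1$ in one line.
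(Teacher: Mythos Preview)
Your proof is correct and rests on the same structural input as the paper's, namely Lemma~\ref{lema-pareja}. The paper's argument is a one-liner: once each vertex has a unique gap of length $s+1$, the vertex is determined by the clockwise circular interval of length $s+1$ that this gap spans, and there are exactly $ks+1$ such intervals on the $(ks+1)$-cycle. Your $(v_1,i'(v))$ parameterization with the case split on whether the long gap wraps around is a perfectly valid alternative count of the same objects, just slightly more bookkeeping; your group-action remark (align the long gaps to see transitivity, uniqueness of the long gap to see freeness) is essentially the paper's bijection phrased dynamically and is the closest match to their argument.
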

\begin{proof}
Again, let $C$ be the cycle on $ks+1$ points labeled by integers $1,2,\ldots,ks+1$ in the clockwise direction. From Lemma \ref{lema-pareja}, we have that each vertex of $\operatorname{KG}(ks+1,k)_{s-\operatorname{stab}}$ is uniquely determined by a clockwise circular interval of length $s+1$ in $C$. Trivially there exist $ks+1$ distinct clockwise circular intervals of length $s+1$ in $C$ and the lemma holds.
\end{proof}

\begin{proposition}
\label{prop-iso}
Let $s,k \geq 2$. Then, $G(ks+1,k) \simeq \operatorname{KG}(ks+1,k)_{s-\operatorname{stab}}$.
\end{proposition}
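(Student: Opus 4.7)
My plan is to exhibit an explicit isomorphism $\Phi: \mathbb{Z}_{ks+1} \to V(\operatorname{KG}(ks+1,k)_{s-\operatorname{stab}})$ and verify that Kneser adjacency becomes the Cayley adjacency of $G(ks+1,k) = \operatorname{Cay}(\mathbb{Z}_{ks+1}, \{k, k+1, \ldots, ks+1-k\})$. The shape of $\Phi$ is dictated by Lemma \ref{lema-pareja}: every vertex of $\operatorname{KG}(ks+1,k)_{s-\operatorname{stab}}$ is a step-$s$ arithmetic progression on the $(ks+1)$-cycle with exactly one gap of length $s+1$. The key algebraic fact is that $\gcd(s, ks+1) = 1$, so $s$ is a unit in $\mathbb{Z}_{ks+1}$, and the correct definition to try is
\[
\Phi(u) \;=\; \bigl\{\, 1 + s(u+i) \pmod{ks+1} : i = 0, 1, \ldots, k-1 \,\bigr\},
\]
with residues taken in $[ks+1]$.

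First I would check that $\Phi(u)$ is a valid vertex: the $k$ residues are distinct because $s$ is a unit, and on the cycle they produce $k-1$ gaps of length $s$ together with one forced gap of length $ks+1-(k-1)s=s+1$, hence the set is $s$-stable. Bijectivity is then immediate: by Lemma \ref{lema-numero-vertices} both domain and codomain have size $ks+1$, and $\Phi(u)=\Phi(u')$ forces $s(u-u')\equiv 0\pmod{ks+1}$ (comparing the unique $s+1$-gap), whence $u=u'$. The core computation is the adjacency translation: $\Phi(u)\cap\Phi(v)\neq\emptyset$ iff $1+s(u+i)\equiv 1+s(v+j)\pmod{ks+1}$ for some $i,j\in\{0,\ldots,k-1\}$, which after cancelling the unit $s$ reduces to $u-v\equiv j-i\pmod{ks+1}$; as $i,j$ vary, this is exactly $u-v\in\{-(k-1),\ldots,k-1\}\pmod{ks+1}$. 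Therefore $\Phi(u)$ and $\Phi(v)$ are adjacent in $\operatorname{KG}(ks+1,k)_{s-\operatorname{stab}}$ iff $v-u\in\{k,k+1,\ldots,ks+1-k\}$, i.e.\ iff $uv\in E(G(ks+1,k))$.

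The step I expect to be most delicate is the choice of the multiplier $s$ inside the definition of $\Phi$. A naive translation bijection $u\mapsto\{1+u,\,1+u+s,\ldots\}$ would yield an adjacency condition of the form $v-u\in s\cdot\{-(k-1),\ldots,k-1\}$, which matches the desired interval $\{-(k-1),\ldots,k-1\}$ only after composing with the group automorphism $x\mapsto sx$ of $\mathbb{Z}_{ks+1}$. Absorbing that twist directly into $\Phi$ is what lets the adjacency computation close in one shot, rather than via an auxiliary Cayley graph $\operatorname{Cay}(\mathbb{Z}_{ks+1}, s\cdot\{k,\ldots,ks+1-k\})$ and a separate conjugation argument.
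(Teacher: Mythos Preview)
Your argument is correct and is genuinely different from the paper's own proof. The paper defines its bijection by writing each $u\in\{0,\dots,ks\}$ in the form $u=jk+i$ with $0\le j\le s-1$, $0\le i\le k-1$, then assigns to $u$ the $k$-set $\{\,j+1+(r-1)s:1\le r\le k-i\,\}\cup\{\,j+2+(r-1)s:k-i+1\le r\le k\,\}$ (with $\phi(ks)$ handled separately), and verifies adjacency through a case analysis on $t=v-u=xk+y$, tracking which coordinates shift by $x$ and which by $x+1$. Your map $\Phi(u)=\{\,1+s(u+i)\bmod (ks+1):0\le i\le k-1\,\}$ exploits instead the single algebraic fact $\gcd(s,ks+1)=1$: because $s$ is a unit, the intersection condition $1+s(u+i)\equiv 1+s(v+j)$ cancels to $u-v\equiv j-i$, and the whole adjacency computation collapses to one line with no cases. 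What the paper's approach buys is an explicit description of the image set in the natural ordered coordinates $v_1<\cdots<v_k$ (useful if one later wants to read off $l_r(v)$ directly), whereas your approach buys a uniform formula and a proof that is essentially forced once one notices the unit $s$; your closing remark about absorbing the automorphism $x\mapsto sx$ into $\Phi$ is exactly the conceptual point that makes this work.
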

\begin{proof}
Let $C$ be a cycle on $ks+1$ points. We assume that the vertices of $G(ks+1,k)$ are disposed over $C$ in clockwise increasing order from $0$ to $ks$. In order to prove the isomorphism, we define the application $\phi : G(ks+1,k) \to \operatorname{KG}(ks+1,k)_{s-\operatorname{stab}}$ as follows: let $u$ be a vertex of $G(ks+1,k)$ such that $u = jk+i$, where $0 \leq j \leq s-1$ and $0 \leq i \leq k-1$. Then, $\phi(u) = \{u_1,\ldots,u_k\}$ where,
$$u_r=
\left\{
\begin{array}{ll}
j+1+(r-1)s,&\operatorname{if } 1 \leq r \leq  k-i\\
j+2+(r-1)s,&\operatorname{if } k-i+1 \leq r \leq  k.
\end{array}
\right.$$

Finally, define $\phi(ks) = \{s+1,2s+1,\ldots,ks+1\}$. From Lemma \ref{lema-numero-vertices}, it is not difficult to prove that $\phi$ is a bijective function. It remains to show that $\phi$ is indeed a graph isomorphism. Let $u,v$ be two vertices in $C(ks+1,k)$. In the sequel, we assume that $v > u$. In fact, if $u >v$ we can always swap $u$ and $v$. Let $u = jk+i$, where $0 \leq j \leq s-1$ and $0 \leq i \leq k-1$. Let $t = v - u$, where $1 \leq t \leq sk$. Let us see that $\phi(u), \phi(v)$ in $\operatorname{KG}(ks+1,k)_{s-\operatorname{stab}}$ are adjacent if and only if 
$k\leq t\leq k(s-1)+1$. Consider $t=xk+y$ where $0\leq x\leq s$ and $0\leq y\leq k-1$. Besides, let $V_y=\{k-i+1-y,\dots,k-i\}$ if $1\leq y\leq k-i$ and $V_y=\{1,\dots,k-i,k(s+1)-i+1-y,\dots,ks\}$ if $y>k-i$. By construction, notice that:
\begin{itemize}
\item if $1\leq y\leq k-1$ then $v_r=u_r+1+x$ if $r\in V_y$ and $v_r=u_r+x$ if $r\notin V_y$.
\item if $y=0$ then $v_r=u_r+x$ for all $r$.
\end{itemize}

Therefore, if $t\leq k-1$ then $v_r=u_r$ for all $r\notin V_y$. So, we have that $\phi(u) \cap \phi(v) \neq \emptyset$. Analogously, if $k(s-1)+2 \leq t \leq ks$ then $v_r=u_{r+1}$ for all $r\in V_y\setminus\{k-i\}$. Again, we have that $\phi(u) \cap \phi(v) \neq \emptyset$. Besides, notice that $l_r(u)=s$ if $r\neq k-i$ and $l_{k-i}(u)=s+1$. From this fact, it follows that $\phi(u) \cap \phi(v)=\emptyset$ if $k\leq t\leq k(s-1)+1$.

Therefore, vertices $u,v$ in $C(ks+1,k)$ are adjacent if and only if vertices $\phi(u), \phi(v)$ in $\operatorname{KG}(ks+1,k)_{s-\operatorname{stab}}$ are adjacent. 
\end{proof}

A direct consequence of Proposition \ref{prop-iso} is that $\chi(\operatorname{KG}(ks+1,k)_{s-\operatorname{stab}}) = s+1$. In fact, Vince \cite{Vince88} has shown, at the end of the eighties, that $\chi(G(n,k)) = \lceil \frac{n}{k} \rceil$, and thus, we obtain that $\chi(G(ks+1,k)) = \chi(\operatorname{KG}(ks+1,k)_{s-\operatorname{stab}}) = s+1$. However, as far as we know, there was no known connections between graphs $G(ks+1,k)$ and $\operatorname{KG}(ks+1,k)_{s-\operatorname{stab}}$. For this reason, twenty years later, Meunier (see Proposition 1 in \cite{Meun11}) computes again the chromatic number of $\operatorname{KG}(ks+1,k)_{s-\operatorname{stab}}$.\\

Let $\operatorname{Cay}(A,S)$ be a Cayley graph. If $a^{-1}Sa = S$ for all $a \in A$, then $Cay(A,S)$ is called a {\em normal Cayley graph}.

\begin{lemma}[\cite{HHP95}]
\label{lema-cayley}
Any normal Cayley graph is hom-idempotent.
\end{lemma}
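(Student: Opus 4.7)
The plan is to exhibit an explicit homomorphism $\mu : G \Box G \to G$, where $G = \operatorname{Cay}(A,S)$. Since $A$ carries a group structure, the natural candidate is simply the multiplication map $\mu(a,b) = ab$. The strategy is then to use the defining edge condition of Cayley graphs together with the normality hypothesis to verify that $\mu$ sends edges to edges.

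To verify that $\mu$ is a graph homomorphism, I would take a generic edge of $G \Box G$ and examine its image. By definition of the Cartesian product, every edge has one of two forms: either $\{(a,b),(a,b')\}$ with $b^{-1}b' \in S$, or $\{(a,b),(a',b)\}$ with $a^{-1}a' \in S$. In the first case, $(ab)^{-1}(ab') = b^{-1}b' \in S$, so $\mu(a,b)$ and $\mu(a,b')$ are adjacent in $G$ directly. In the second case, $(ab)^{-1}(a'b) = b^{-1}(a^{-1}a')b$, which is the conjugate of the element $a^{-1}a' \in S$ by $b$; here the normality hypothesis $b^{-1}Sb = S$ is exactly what is needed to conclude that this conjugate again lies in $S$, so the images are adjacent.

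The only subtlety is checking that the two images are genuinely distinct, so that they are joined by an edge rather than coinciding. This is automatic: the group element $b^{-1}b'$ in the first case, and the conjugate $b^{-1}(a^{-1}a')b$ in the second, is nonidentity because $1 \notin S$ and conjugation preserves non-identity. Normality is invoked exactly once — in handling edges that vary in the \emph{first} coordinate — and no further ingredient is required. There is essentially no obstacle: the proof is a one-line definition followed by a two-case verification, the slightly nontrivial case being the one that calls on the normality condition $a^{-1}Sa = S$.
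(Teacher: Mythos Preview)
Your proof is correct and is the standard argument: the group multiplication $\mu(a,b)=ab$ gives a homomorphism $G\Box G\to G$, with normality of $S$ needed precisely for edges varying in the first coordinate. Note that the paper does not actually prove this lemma---it is quoted from \cite{HHP95} without proof---so there is no in-paper argument to compare against; your approach matches the original source.
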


Note that all Cayley graphs on abelian groups are normal, and thus hom-idempotents. In particular, the {\em circulant} graphs are Cayley graphs on cyclic groups (i.e., cycles, powers of cycles, complements of powers of cycles, complete graphs, etc). Therefore, by Proposition \ref{prop-iso} and Lemma \ref{lema-cayley}, we have the following result.

\begin{theorem}
\label{teo-hom-idemp}
Let $s,k \geq 2$. Then, $\operatorname{KG}(ks+1,k)_{s-\operatorname{stab}}$ is hom-idempotent.
\end{theorem}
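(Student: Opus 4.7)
The proof will essentially be a one-line concatenation of the three results already assembled in the section. The plan is to invoke Proposition \ref{prop-iso} to replace $\operatorname{KG}(ks+1,k)_{s-\operatorname{stab}}$ by the isomorphic circulant $G(ks+1,k) = \operatorname{Cay}(\mathbb{Z}_{ks+1},\{k,k+1,\ldots,(s-1)k+1\})$, then observe that since $\mathbb{Z}_{ks+1}$ is abelian the conjugation condition $a^{-1}Sa = S$ in the definition of a normal Cayley graph holds trivially, and finally apply Lemma \ref{lema-cayley} to conclude hom-idempotence.

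More concretely, I would first remark that hom-idempotence is an isomorphism invariant: if $G \simeq H$ and there is a homomorphism $H^2 \to H$, then composing with the isomorphism on each coordinate and with its inverse on the target yields a homomorphism $G^2 \to G$ (using that the cartesian product respects isomorphism). This reduces the theorem to showing that $G(ks+1,k)$ is hom-idempotent. Next I would note that $G(ks+1,k) = \operatorname{Cay}(\mathbb{Z}_{ks+1},S)$ where $S=\{k,k+1,\ldots,(s-1)k+1\}$ (which is closed under inverses modulo $ks+1$ and avoids $0$, so the Cayley graph is well defined). Since $\mathbb{Z}_{ks+1}$ is abelian, $a^{-1}Sa = S$ for every $a \in \mathbb{Z}_{ks+1}$, so this Cayley graph is normal.

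The theorem then follows directly from Lemma \ref{lema-cayley}, which asserts that every normal Cayley graph is hom-idempotent. Explicitly, the homomorphism $(u,v) \mapsto u+v$ (addition in $\mathbb{Z}_{ks+1}$) from $G(ks+1,k)^{2}$ to $G(ks+1,k)$ witnesses hom-idempotence: if $(u,v)(u',v')$ is an edge of the cartesian square then exactly one coordinate changes and the difference lies in $S$, so $(u+v)-(u'+v') \in S$ as well. Transporting back via the isomorphism $\phi$ of Proposition \ref{prop-iso} gives the required homomorphism $\operatorname{KG}(ks+1,k)_{s-\operatorname{stab}}^{2} \to \operatorname{KG}(ks+1,k)_{s-\operatorname{stab}}$.

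There is no real obstacle here; the substantive work was done in Proposition \ref{prop-iso}. The only minor care needed is to verify that the connection set $S$ of $G(ks+1,k)$ is indeed closed under inverses modulo $ks+1$ (so that Cayley graph terminology applies), which is immediate from the symmetry $k \leq t \leq n-k \iff k \leq n-t \leq n-k$, and to state clearly that abelian implies normal. Once those two trivialities are in place, Lemma \ref{lema-cayley} closes the argument.
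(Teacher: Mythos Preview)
Your proposal is correct and follows exactly the paper's approach: use Proposition~\ref{prop-iso} to identify $\operatorname{KG}(ks+1,k)_{s-\operatorname{stab}}$ with the circulant $G(ks+1,k)$, observe that Cayley graphs on abelian groups are automatically normal, and conclude via Lemma~\ref{lema-cayley}. The extra details you supply (isomorphism invariance of hom-idempotence, the explicit map $(u,v)\mapsto u+v$, and closure of the connection set under inverses) are all fine and merely make explicit what the paper leaves to the reader.
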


\section{Properties of the graph $\operatorname{KG}(2s+2,2)_{s-\operatorname{stab}}$ }

In this section, we study some properties of the graph $\operatorname{KG}(2s+2,2)_{s-\operatorname{stab}}$ , with $s \geq 3$. First recall that in Section \ref{Schrijver}, we use the strong structural property of "criticality" of Schrijver graphs to prove that almost all Schrijver graphs are not weakly hom-idempotent. We will prove in this section  that graphs $\operatorname{KG}(2s+2,2)_{s-\operatorname{stab}}$ are not $\chi$-critical for all $s \geq 3$. However, we will prove that these graphs are core and thus, we will be able to deduce that $\operatorname{KG}(2s+2,2)_{s-\operatorname{stab}}$ is not hom-idempotent for all $s \geq 3$. \\

In 2011, Meunier \cite{Meun11} has settled a conjecture concerning the chromatic number of $r$-uniform $s$-stable Kneser hypergraphs, with $r,s \geq 2$,  which is still an open problem, even for $2$-uniform $s$-stable Kneser (hyper)graphs. For $2$-uniform $s$-stable Kneser (hyper)graphs, the conjecture can be expressed as follows:

\begin{conjecture}[\cite{Meun11}]
\label{chiKG}
$\chi(\operatorname{KG}(n,k)_{s-\operatorname{stab}})=n-(k-1)s$, for any $s,k \geq2$ and $n > sk$.
\end{conjecture}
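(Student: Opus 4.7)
My plan is to try to establish the conjectured value $n - s(k-1)$ as both an upper and a lower bound on $\chi(\operatorname{KG}(n,k)_{s-\operatorname{stab}})$, tackling them separately. The upper bound is short: I would imitate the classical ``min-coloring'' used for Kneser graphs, defining $c(A) = \min\{n-s(k-1),\ \min(A)\}$ on the vertex set, a map to $\{1,\ldots,n-s(k-1)\}$. If two disjoint $s$-stable $k$-sets $A,B$ share a color $c_0 < n-s(k-1)$, then $c_0 = \min(A) = \min(B) \in A \cap B$, a contradiction. If they share color $n-s(k-1)$, then $A,B \subseteq \{n-s(k-1),\ldots,n\}$; $s$-stability (together with the hypothesis $n > sk$, which guarantees that the cyclic gap between $n$ and $n-s(k-1)$ is at least $s$) forces $A = B = \{n-s(k-1), n-s(k-2), \ldots, n\}$, again a contradiction. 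Hence $c$ is proper and $\chi \leq n-s(k-1)$.

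The lower bound is where the main obstacle lies. For $s = 2$, Schrijver obtained it by a $\mathbb{Z}_2$-equivariant Borsuk--Ulam argument. For $s \geq 3$, I would try to lift this to a $\mathbb{Z}_p$-version via Sarkaria-- or Ziegler-type index computations, exploiting the cyclic shift action of $\mathbb{Z}_n$ on $[n]$. The obstruction is genuine: the $s$-stability constraint does not interact cleanly with any single prime, and for $s \geq 3$ the straightforward topological bounds come out strictly below $n-s(k-1)$. I would not expect a uniform topological proof of the full conjecture to be easy.

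Because the general lower bound looks out of reach, I would specialize to the family the paper targets next, $(n,k) = (2s+2, 2)$ with $s \geq 3$, where the conjectured value is $s+2$ and the upper bound above already gives $\chi \leq s+2$. The graph has $3(s+1)$ vertices: the $s+1$ antipodal pairs $P_j = \{j, j+s+1\}$, which form a clique $K_{s+1}$, and the $2(s+1)$ non-antipodal pairs $\{a,a+s\}$ (arithmetic modulo $[n]$). To force $\chi \geq s+2$, I would assume an $(s+1)$-coloring, WLOG with $P_j$ receiving color $j$. A non-antipodal pair $\{a,a+s\}$ is adjacent to every $P_j$ with $j \not\equiv a, a-1 \pmod{s+1}$, so its color is forced into the two-element list $L_a = \{a \bmod (s+1),\ (a-1) \bmod (s+1)\}$; record the choice by a bit $y_a \in \{0,1\}$, with $y_a = 1$ meaning the chosen color is $a \bmod (s+1)$. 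Two disjoint-pair computations then finish the argument: the pair $\{a,a+s\}$ against $\{a+s+1, a-1\}$ shares the list $L_a$ and thus forces $y_{a+s+1} \neq y_a$, while the pair $\{a,a+s\}$ against $\{a+1, a+s+1\}$ forces $(y_a, y_{a+1}) \neq (1,0)$ (using $s \geq 2$ to ensure $a-1 \not\equiv a+1 \pmod{s+1}$). The latter, read cyclically on $\mathbb{Z}_n$, forces $y$ to be constant, contradicting the former. Hence no $(s+1)$-coloring exists, $\chi \geq s+2$, and the conjecture holds for this family.
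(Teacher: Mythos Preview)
The statement is a \emph{conjecture}; the paper does not claim to prove it in full generality, only the special case $(n,k)=(2s+2,2)$ with $s\ge 3$. You correctly identify this situation: your general upper-bound argument via the min-coloring is standard and valid, and you are right that the general lower bound is the genuine obstacle and remains open. So on the general statement there is nothing to compare---neither you nor the paper proves it.

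For the special case $(2s+2,2)$ your lower-bound argument is correct and takes a genuinely different route from the paper's. The paper adds only two antipodal vertices $\{1,s+2\},\{2,s+3\}$ to the non-antipodal set $S$, invokes Talbot's description of maximum stable sets to pin down the colour classes containing those two vertices, and then exhibits an explicit clique $U$ of size $s$ that must avoid both colours. Your argument instead keeps the whole graph, observes that in an $(s+1)$-colouring the clique $T$ of antipodal pairs forces every non-antipodal vertex $\{a,a+s\}$ into a two-element list $L_a$, encodes the choice by a bit $y_a$, and derives a contradiction from the two edge-constraints $y_{a+s+1}\neq y_a$ and $(y_a,y_{a+1})\neq(1,0)$. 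The cyclic monotonicity of the second constraint forces $y$ constant, contradicting the first. This is more self-contained than the paper's proof (no appeal to Talbot's theorem, no preliminary isomorphism with a Cayley graph) and yields the same conclusion $\chi=s+2$. The paper's approach, on the other hand, actually shows a bit more along the way: that a proper subgraph on $|S|+2$ vertices already has chromatic number $s+2$, which feeds directly into their non-criticality statement.
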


Since this conjecture was stated, some papers have confirmed it for particular cases (see, e.g \cite{Jonsson12,Meun11}). However, the case $k=2$ and $n=2s+2$ is still open. We will prove that Meunier's Conjecture \ref{chiKG} holds for the case $k = 2$, $n = 2s+2$, and any $s \geq 3$. In fact, we will show that $\chi(\operatorname{KG}(2s+2,2)_{s-\operatorname{stab}})=s+2$ .

\medskip

Let us consider the $s$-stable Kneser graphs $\operatorname{KG}(2s+2,2)_{s-\operatorname{stab}}$ for $s\geq3$.
It is known that $s+1\leq\chi(\operatorname{KG}(2s+2,2)_{s-\operatorname{stab}})\leq s+2$. 
Let $\left\{I_i\right\}_{i\in [2s+2]}$ be the family of all maximum stable sets of $\operatorname{KG}(2s+2,2)_{s-\operatorname{stab}}$, where $I_i$ has center $i$, for $i\in [2s+2]$ (see Theorem 3 in \cite{Talbot03}).

\medskip

Let $S=\{\{1,1+s\},\{2,2+s\},\ldots,\{s+2,s+2+s\},\{1,3+s\},\{2,4+s\},\ldots,\{s,2s+2\}\}$ and $G_n$ be the Cayley graphs $\operatorname{Cay}(\mathbb{Z}_n,\{\pm 1,\pm 2,\ldots,\pm (s-1),s+1\})$ with $n=2s+2$. Let us denote $KG[S]$ the subgraph induced by $S$ in $\operatorname{KG}(2s+2,2)_{s-\operatorname{stab}}$. Observe that $KG[S]$ is isomorphic to $G_n$, with the isomorphism $\phi:G_n\mapsto KG[S]$ defined as follows:

\medskip

$\phi(u)=\left\{
\begin{array}[h]{ll}
	\{u+1,u+1+s\} & \text{if } u\in\{0,\ldots,s+1\};\\
	\{u-(s+1),u+1\} & \text{if } u\in\{s+2,\ldots,2s+1\}.
\end{array}
\right.$

\medskip

Besides, notice that $C_n^{(s-1)}$ is a subgraph (not induced) of $KG[S]$, therefore $\chi(KG[S])\geq \chi(C_n^{(s-1)})=s+1$. This last fact holds from the following known result.

\begin{theorem}[\cite{ProwWood03}]
\label{chiC}
Let $n\geq2a$ and $n=q(a+1)+r$, with $q>0$ and $0\leq r\leq a$. Then, $\chi(C_n^a)=a+1+\left\lceil \frac{r}{q}\right\rceil$.
\end{theorem}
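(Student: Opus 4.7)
The approach is to prove matching lower and upper bounds on $\chi(C_n^a)$: the lower bound from an independence-number computation and the upper bound from an explicit coloring. For the lower bound, I would note that an independent set $I\subseteq\mathbb{Z}_n$ in $C_n^a$ is exactly a subset whose pairwise cyclic distances all exceed $a$, so if $|I|=m$ the $m$ cyclic gaps between successive elements sum to $n$ and are each at least $a+1$, forcing $\alpha(C_n^a)=\lfloor n/(a+1)\rfloor=q$. Since every color class of a proper coloring is independent, $\chi(C_n^a)\geq\lceil n/q\rceil$, which for $n=q(a+1)+r$ equals $a+1+\lceil r/q\rceil$.

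For the matching upper bound I would build an explicit proper coloring using exactly $k=a+1+\lceil r/q\rceil$ colors by partitioning $\mathbb{Z}_n$ into $q$ consecutive cyclic arcs of nearly equal length and coloring each arc of length $\ell$ in cyclic order by $0,1,\dots,\ell-1$. Writing $r=p'q+r''$ with $0\leq r''<q$ via Euclidean division, I would take $r''$ arcs of length $a+2+p'$ and $q-r''$ arcs of length $a+1+p'$; these lengths sum to $n$, and the longest arc has length exactly $k$, so only the palette $\{0,\dots,k-1\}$ is used. The properness check is then local: since every arc has length $\geq a+1$, any window of $a+1$ consecutive vertices meets at most two arcs; inside one arc the window receives $a+1$ consecutive integers, and across a boundary the final $t$ colors $\{\ell_A-t,\dots,\ell_A-1\}$ of arc $A$ are disjoint from the initial $a+1-t$ colors $\{0,\dots,a-t\}$ of arc $B$ because $\ell_A-t\geq (a+1)-t>a-t$.

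I expect the main obstacle to lie in handling the arc-length bookkeeping cleanly: splitting into the sub-cases $r''=0$ and $r''>0$, verifying in each that the prescribed multiset of arc lengths sums to $n$, and ensuring the uniform lower bound $\ell\geq a+1$ always holds so that the short two-arc verification above applies. Once that uniform bound is in hand, the color-clash check at boundaries is a one-line comparison of integer intervals, so the substantive work is combinatorial accounting rather than any graph-theoretic subtlety.
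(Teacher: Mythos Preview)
The paper does not give its own proof of this theorem: it is quoted verbatim from \cite{ProwWood03} and used as a black box (once to compute $\chi(C_n^{(s-1)})$ for $n=2s+2$ and once implicitly for $C_n^{s-1}$ in Remark~\ref{iso}). So there is nothing in the paper to compare your argument against.

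That said, your sketch is correct and is essentially the standard proof. The independence-number calculation $\alpha(C_n^a)=\lfloor n/(a+1)\rfloor=q$ via the gap-sum argument is right, and it immediately yields $\chi\geq\lceil n/q\rceil=a+1+\lceil r/q\rceil$. For the upper bound, your arc decomposition with $r''$ arcs of length $a+2+p'$ and $q-r''$ arcs of length $a+1+p'$ (where $r=p'q+r''$, $0\le r''<q$) indeed sums to $n$, uses exactly $k=a+1+\lceil r/q\rceil$ colors, and the boundary check $\ell_A-t\geq a+1-t>a-t$ is exactly what is needed. The only minor addition worth making explicit is the edge case $r''=0$ (all arcs of length $a+1+p'$, longest length equals $k$), and the observation that $q\geq 2$ follows from $n\geq 2a\geq 2(a+1)-2$ together with $r\leq a$, which guarantees the ``at most two arcs in any window'' step is not vacuous when $q=1$ might otherwise be a worry; in fact $n\geq 2a$ with $0\le r\le a$ forces $q\geq1$, and when $q=1$ one has $n=a+1+r\le 2a+1$, the graph $C_n^a$ is complete (since $n\le 2a+1$), and $\chi=n=a+1+r=a+1+\lceil r/1\rceil$ directly. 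With that boundary case noted, the argument is complete.
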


On the other hand, the set $T=\{\{1,2+s\},\{2,3+s\},\ldots,\{s+1,2s+2\}\}$ induces a complete graph in $\operatorname{KG}(2s+2,2)_{s-\operatorname{stab}}$ and $S,T$ induce a partition of $V(\operatorname{KG}(2s+2,2)_{s-\operatorname{stab}})$.

\medskip

Let $G$ be the subgraph induced by $S\cup\{\{1,2+s\},\{2,3+s\}\}$. We will prove that $\chi(G)\geq s+2$. Assume that $\chi(G)=s+1$. Let $f$ be a minimum coloring of $G$. Since $\alpha(KG[S])=2$, each color class of $f$ has exactly two vertices in $KG[S]$. Besides, $f^{-1}(f(\{1,2+s\}))$ and $f^{-1}(f(\{2,3+s\}))$ are disjoint maximum stable sets in $G$. Then, $f^{-1}(f(\{1,2+s\}))=I_1$ and $f^{-1}(f(\{2,3+s\}))=I_2$ or $f^{-1}(f(\{1,2+s\}))=I_{2+s}$ and $f^{-1}(f(\{2,3+s\}))=I_{3+s}$. W.l.o.g. we assume that $f^{-1}(f(\{1,2+s\}))=I_1$ and $f^{-1}(f(\{2,3+s\}))=I_2$. Therefore, $f(\{1,1+s\})=f(\{1,3+s\})=f(\{1,2+s\})$ and $f(\{2,2+s\})=f(\{2,4+s\})=f(\{2,3+s\})$. Let $a=f(\{1,2+s\})$ and $b=f(\{2,3+s\})$. Let $N(v)$ be the set of neighbors of vertex $v$ in $G$. The set $U=\{\{3,3+s\},\{4,4+s\},\ldots,\{s+2,s+2+s\}\}$ verifies that $U\subset N(\{1,1+s\})\cup N(\{1,3+s\})$ and $U\subset N(\{2,2+s\})\cup N(\{2,4+s\})$. Then $f(v)\notin\{a,b\}$ for all $v\in U$. Since $U$ has cardinality $s$ and induces a complete graph in $G$, $f$ need at least $s+2$ colors, which is a contradiction.

\medskip

Thus, we obtain the following lemma.
\begin{lemma}
For all $s\geq3$, $\chi(\operatorname{KG}(2s+2,2)_{s-\operatorname{stab}})=s+2$ and $\operatorname{KG}(2s+2,2)_{s-\operatorname{stab}}$ is not $\chi$-critical.
\end{lemma}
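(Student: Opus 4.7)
The bounds $s+1 \leq \chi(\operatorname{KG}(2s+2,2)_{s-\operatorname{stab}}) \leq s+2$ are already in hand, so the main task is to exhibit a proper subgraph with chromatic number $s+2$: this single witness will pin the chromatic number and simultaneously show non-$\chi$-criticality. My candidate, as suggested by the setup above, is $G$, the subgraph induced by $S \cup \{\{1, s+2\}, \{2, s+3\}\}$, which is proper (it omits for example $\{3, s+4\} \in T$) yet contains the rigid structure of $KG[S]$ together with two vertices from $T$ that will be forced to demand an extra color.

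The plan is to assume $\chi(G) = s+1$ and derive a contradiction. Fix a proper $(s+1)$-coloring $f$. Since $\alpha(KG[S]) = 2$ and $|S| = 2s+2$, every color class contains exactly two vertices of $S$. The extra vertices $\{1, s+2\}$ and $\{2, s+3\}$ are adjacent in $G$, so they receive distinct colors $a, b$, and their color classes each pick up two further vertices of $S$; thus both classes have size three and are necessarily maximum stable sets of the full Schrijver graph. The stars containing $\{1, s+2\}$ are $I_1$ and $I_{s+2}$, while those containing $\{2, s+3\}$ are $I_2$ and $I_{s+3}$. Of the four combinations, only $\{I_1, I_2\}$ and $\{I_{s+2}, I_{s+3}\}$ are pairwise disjoint, and these two options are interchanged by a dihedral symmetry, so I may assume $f^{-1}(a) = I_1$ and $f^{-1}(b) = I_2$. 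In particular, $\{1, s+1\}$ and $\{1, s+3\}$ both receive color $a$, while $\{2, s+2\}$ and $\{2, s+4\}$ both receive color $b$.

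The contradiction then comes from the set $U = \{\{i, i+s\} : 3 \leq i \leq s+2\} \subseteq S$ of size $s$, which induces a clique in $G$ since no two of its elements share an entry. Every vertex of $U$ is adjacent in $G$ either to $\{1, s+1\}$ or to $\{1, s+3\}$, so $U$ avoids color $a$; symmetrically $U$ avoids $b$. Hence the $s$-clique $U$ would need to be properly colored with only $s-1$ colors, which is impossible. Therefore $\chi(G) \geq s+2$, and combined with the known upper bound this yields $\chi(\operatorname{KG}(2s+2,2)_{s-\operatorname{stab}}) = s+2$; because $G$ is a proper subgraph attaining the same chromatic number, the Schrijver graph is not $\chi$-critical. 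The main care I anticipate is in verifying the two coverage claims $U \subseteq N(\{1,s+1\}) \cup N(\{1,s+3\})$ and $U \subseteq N(\{2,s+2\}) \cup N(\{2,s+4\})$ over the full range $i \in \{3,\ldots,s+2\}$, which requires a short case distinction at the boundary indices (notably $i = s+1$ and $i = s+2$) where one of the relevant pairs might fail to be disjoint from a given element of $I_1$ or $I_2$.
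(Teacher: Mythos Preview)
Your proposal is correct and follows essentially the same argument as the paper: the same proper induced subgraph $G$ on $S\cup\{\{1,s+2\},\{2,s+3\}\}$, the same counting that forces each colour class to meet $S$ in exactly two vertices, the same reduction (via the classification of maximum stable sets as stars $I_i$) to the case $f^{-1}(a)=I_1$, $f^{-1}(b)=I_2$, and the same clique $U=\{\{i,i+s\}:3\le i\le s+2\}$ that is covered by $N(\{1,s+1\})\cup N(\{1,s+3\})$ and by $N(\{2,s+2\})\cup N(\{2,s+4\})$. Your write-up is in fact slightly more explicit than the paper's about why the WLOG step is legitimate (the rotation by $s+1$ fixes $G$ and swaps the two admissible pairs of stars) and about the boundary checks $i=s+1,s+2$ in the coverage claims.
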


\medskip

However, let us see that $\operatorname{KG}(2s+2,2)_{s-\operatorname{stab}}$ is a core. Firstly, notice that the chromatic number of $\operatorname{KG}(2s+2,2)_{s-\operatorname{stab}}-\{s+2,2s+2\}$ is $s+1$, since its vertex set admits the partition $I_1,\ldots,I_{s},J$ with $J=\{\{s+1,2s+1\},\{s+1,2s+2\}\}$. Besides, since $\operatorname{Aut}(\operatorname{KG}(2s+2,2)_{s-\operatorname{stab}})$ acts transitively on $S$, $\chi(\operatorname{KG}(2s+2,2)_{s-\operatorname{stab}}-v)=s+1$ for all $v\in S$.

\medskip

Therefore, $S$ is contained in the vertex set of the core of $\operatorname{KG}(2s+2,2)_{s-\operatorname{stab}}$. Assume that $\operatorname{KG}(2s+2,2)_{s-\operatorname{stab}}$ is not a core and let $G'$ be its core. Then, there is a retraction $f$ of $\operatorname{KG}(2s+2,2)_{s-\operatorname{stab}}$ onto $G'$ \cite{HN92}. It follows that $f(u)\in S$ for some vertex $u\in T$, since $T$ induces a complete graph in $\operatorname{KG}(2s+2,2)_{s-\operatorname{stab}}$. Notice that $u=\{i,i+s+1\}$ for some $i\in \{1,\ldots,s+1\}$. Then $f(u)\in\{\{i,i+s\},\{i,i+s+2\},\{i-1,i+s+1\},\{i+1,i+s+1\}\}$.

\medskip

Let us prove that if $f(u)\in\{\{i,i+s\},\{i,i+s+2\},\{i-1,i+s+1\},\{i+1,i+s+1\}\}$, there is a vertex $v\in S$ such that $u$ and $v$ are adjacent in $\operatorname{KG}(2s+2,2)_{s-\operatorname{stab}}$ but $f(u)$ and $f(v)$ are not adjacent in $G'$, which is a contradiction. Observe that $f(v)=v$ since $f$ is a retraction of $\operatorname{KG}(2s+2,2)_{s-\operatorname{stab}}$ onto $G'$. Then, for $f(u)=\{i,i+s\},\{i,i+s+2\},\{i-1,i+s+1\},\{i+1,i+s+1\}$ let $v=\{i-2,i+s\},\{i+2,i+s+2\},\{i-1,i+s-1\},\{i+1,i+s+3\}$, respectively.

\medskip

Therefore, we obtain the following result.
\begin{lemma}
$\operatorname{KG}(2s+2,2)_{s-\operatorname{stab}}$ is a core.
\end{lemma}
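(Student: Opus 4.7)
The plan is to rule out the existence of any proper retract of $G := \operatorname{KG}(2s+2,2)_{s-\operatorname{stab}}$, so that any retraction $f: G \to G' \subseteq G$ is forced to be an automorphism. First I would argue that $S \subseteq V(G')$ for every retract $G'$. Using the vertex-transitivity of $\operatorname{Aut}(G)$ on $S$ (a consequence of Torres' identification of the automorphism group with $D_{4s+4}$) together with the explicit $(s+1)$-coloring of $G - \{s+2, 2s+2\}$ by the classes $I_1, \ldots, I_s, J$ with $J = \{\{s+1, 2s+1\}, \{s+1, 2s+2\}\}$, I would deduce that $\chi(G - v) = s+1$ for every $v \in S$. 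Since homomorphic equivalence preserves the chromatic number and the preceding lemma yields $\chi(G) = s+2$, any retract $G'$ missing a vertex $v \in S$ would sit inside $G - v$, forcing $\chi(G') \leq s+1 < \chi(G)$ --- impossible. Hence $S \subseteq V(G')$.

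Next I would force some vertex of $T$ to have its image in $S$. Since $T$ induces an $(s+1)$-clique and homomorphisms are injective on cliques, $f|_T$ is one-to-one; if it mapped $T$ into $T$ it would be a permutation of $T$, and combined with $S \subseteq V(G')$ this would give $V(G') = V(G)$, contradicting that $G'$ is a proper retract. Hence some $u = \{i, i+s+1\} \in T$ satisfies $f(u) \in S$.

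The third and most delicate stage is the case analysis constraining $f(u)$. Because $f$ is a retraction and $S$ lies in $V(G')$, we have $f(v) = v$ for every $v \in S$, so $f(u)$ must be adjacent in $G$ to every $S$-neighbor of $u$. A direct enumeration of the $S$-neighborhood of $\{i, i+s+1\}$, followed by intersecting their common $S$-neighbors, should narrow $f(u)$ down to exactly the four candidates $\{i, i+s\}$, $\{i, i+s+2\}$, $\{i-1, i+s+1\}$, $\{i+1, i+s+1\}$. For each candidate I would then exhibit an explicit $v \in S$ adjacent to $u$ but sharing an element with $f(u)$, namely the four witnesses $\{i-2, i+s\}$, $\{i+2, i+s+2\}$, $\{i-1, i+s-1\}$, $\{i+1, i+s+3\}$ respectively; since $f(v) = v$, the pair $\{u, v\}$ is an edge while $\{f(u), v\}$ is not, contradicting that $f$ is a homomorphism.

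The main obstacle lies in this last stage, specifically in verifying that no $S$-vertex outside the listed four can be adjacent to every $S$-neighbor of $u$: this demands a careful walk through the pairs in $S = \{\{j, j+s\}\} \cup \{\{j, j+s+2\}\}$ with attention to the boundary indices $i = 1$ and $i = s+1$, where cyclic wrap-around modulo $2s+2$ shifts certain pairs between $S$ and $T$. Once that enumeration is complete, checking each of the four explicit witnesses reduces to confirming that the designated $v$ is $s$-stable, disjoint from $u$, and shares an element with $f(u)$.
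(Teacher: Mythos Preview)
Your plan matches the paper's proof essentially step for step: the same argument that $S\subseteq V(G')$ via $\chi(G-v)=s+1$ for $v\in S$, the same forcing of some $u\in T$ with $f(u)\in S$, the same four candidates for $f(u)$, and the identical four witnesses $v$. The one place you overcomplicate things is the narrowing to the four candidates: the paper's (implicit) reason is simply that $f(u)\in S$ is fixed by the retraction, so $\{u,f(u)\}$ cannot be an edge (it would collapse to a loop under $f$), hence $f(u)$ must be one of the four $S$-vertices sharing an element with $u=\{i,i+s+1\}$. Your alternative route of intersecting the common $S$-neighbors of all of $N_S(u)$ would in fact return the empty set---every element of $[2s+2]\setminus\{i,i+s+1\}$ lies in some $v\in N_S(u)$, so no $w\in S$ can be disjoint from all of them---which already yields the contradiction and renders the witness step redundant; either path closes the argument.
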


\medskip


In order to obtain that $\operatorname{KG}(2s+2,2)_{s-\operatorname{stab}}$ are not hom-idempotent, we can follow the reasoning in Section \ref{Schrijver}, since $s$-stable Kneser graphs $\operatorname{KG}(2s+2,2)_{s-\operatorname{stab}}$ are cores. Firstly, we will observe that if $G=\operatorname{KG}(2s+2,2)_{s-\operatorname{stab}}$, the graphs $\mbox{Cay}(\mbox{Aut}(G),S_{G})$ is isomorphic to the disjoint union of two $C_n^{s-1}$. Moreover, we prove a more general result.

\begin{remark}
\label{iso}
If $G=\operatorname{KG}(2s+2,2)_{s-\operatorname{stab}}$ or $G=\operatorname{KG}(n,k)_{s-\operatorname{stab}}$ with $n\geq (k+1)s-1$, $\operatorname{Cay}(\operatorname{Aut}(G),S_{G})$ is isomorphic to the disjoint union of two $C_n^{s-1}$.
\end{remark}
\begin{proof}
It is easy to see that $\sigma^i\sigma^j=\sigma^{i+j}$. Besides, if $n$ is odd we have:

\bigskip

$\sigma^j\rho_i=\rho_m$ with

\bigskip

$m=\left\{
\begin{array}[h]{ll}
	i+\frac{n-1}{2}+\frac{j+1}{2} & \text{if}\ j\ \text{is odd},\\
	i+\frac{j}{2} & \text{if}\ j\ \text{is even},
\end{array}\right.$

\bigskip

and if $n$ is even:

\bigskip

$\sigma^j\rho_i=\left\{
\begin{array}[h]{ll}
	\delta_m & \text{if}\ j\ \text{is odd, with}\ m=i+\frac{j+1}{2}\ (\mbox{mod}\ \frac{n}{2}),\\
	\rho_m & \text{if}\ j\ \text{is even, with}\ m=i+\frac{j}{2}\ (\mbox{mod}\ \frac{n}{2}).
\end{array}\right.$

\bigskip

Therefore, since $\operatorname{Aut}(G)$ is isomorphic to $D_{2n}$ and $S_{G}=\{\sigma^i:i=1,\dots,s-1,n-s+1,\dots,n-1\}$, from previous facts it follows that the rotations induce a $C_n^{s-1}$ and the reflexions also induce a $C_n^{s-1}$.
\end{proof}

Finaly, to prove that if $G=\operatorname{KG}(2s+2,2)_{s-\operatorname{stab}}$ then $G \not\to \operatorname{Cay}(\operatorname{Aut}(G), S_G)$, it is enough to notice that, from Theorem \ref{chiC}, $\chi(C_{2s+2}^{s-1})=s+1<s+2$. So, by Proposition \ref{prop-core}, we obtain the following result.

\begin{lemma}
For $s \geq 3$, $\operatorname{KG}(2s+2,2)_{s-\operatorname{stab}}$ is not hom-idempotent.
\end{lemma}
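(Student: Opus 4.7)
The plan is to invoke Proposition \ref{prop-core} applied to $G=\operatorname{KG}(2s+2,2)_{s-\operatorname{stab}}$, which is a core by the preceding lemma (so $G^{\bullet}=G$). By Proposition \ref{prop-core}, $G$ is hom-idempotent if and only if $G\leftrightarrow \operatorname{Cay}(\operatorname{Aut}(G),S_{G})$, and in particular it requires the homomorphism $G\to \operatorname{Cay}(\operatorname{Aut}(G),S_{G})$. I will rule out this homomorphism on chromatic-number grounds.

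First I would identify the shift set $S_G$. For $s=3$ one has $n=2s+2=8$ and $(k+1)s-1=8$, so Lemma \ref{lema-shift} applies directly and yields $S_G=\{\sigma^i: i\in\{1,\dots,s-1\}\cup\{n-s+1,\dots,n-1\}\}$. For $s\geq 4$, we have $sk+1=2s+1\leq n=2s+2\leq 3s-2=s(k+1)-2$, with $r=n-sk=2$, so Lemma \ref{lema-shift2} applies; since $k=2$ the union $\bigcup_{m\in[k-2]}\{ms+r+1,\dots,(m+1)s-1\}$ is empty, so again $S_G=\{\sigma^i:i\in\{1,\dots,s-1\}\cup\{s+3,\dots,2s+1\}\}$. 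In either case, Remark \ref{iso} identifies $\operatorname{Cay}(\operatorname{Aut}(G),S_G)$ with the disjoint union of two copies of $C_{2s+2}^{(s-1)}$.

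Next I would compute the chromatic number of this Cayley graph. Writing $n=2s+2=q(a+1)+r$ with $a=s-1$, one finds $q=2$ and $r=2$, so Theorem \ref{chiC} gives
\[
\chi\!\left(C_{2s+2}^{(s-1)}\right)=(s-1)+1+\left\lceil \tfrac{2}{2}\right\rceil=s+1,
\]
whence $\chi(\operatorname{Cay}(\operatorname{Aut}(G),S_G))=s+1$. Combining this with the previously established equality $\chi(G)=s+2$ rules out any homomorphism $G\to \operatorname{Cay}(\operatorname{Aut}(G),S_G)$, and therefore $G$ is not hom-idempotent by Proposition \ref{prop-core}.

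The only non-routine point is really a bookkeeping one: making sure that for every $s\geq 3$ the correct lemma among \ref{lema-shift} and \ref{lema-shift2} applies to the boundary value $n=2s+2$, so that the shift set is known exactly and Remark \ref{iso} can be used. Once that case-split is cleanly handled, the chromatic-number comparison $s+1<s+2$ closes the argument immediately.
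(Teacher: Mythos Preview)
Your proof is correct and follows essentially the same approach as the paper: use that $G$ is a core, identify $\operatorname{Cay}(\operatorname{Aut}(G),S_G)$ with two copies of $C_{2s+2}^{(s-1)}$ via Remark~\ref{iso}, compute $\chi(C_{2s+2}^{(s-1)})=s+1$ by Theorem~\ref{chiC}, and conclude from $\chi(G)=s+2$ and Proposition~\ref{prop-core}. Your explicit case-split between Lemmas~\ref{lema-shift} and~\ref{lema-shift2} to pin down $S_G$ is a bit more bookkeeping than the paper does (Remark~\ref{iso} already covers $G=\operatorname{KG}(2s+2,2)_{s-\operatorname{stab}}$ by name), but it is correct and arguably clarifies why that remark applies.
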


\medskip

Observe that if Conjecture \ref{chiKG} is true for $n\geq (k+1)s-1$ and the graphs $\operatorname{KG}(n,k)_{s-\operatorname{stab}}$ are cores, by an analogous reasoning as before we obtain that the following conjecture is true.

\begin{conjecture}
If $n\geq (k+1)s-1$ and $s\geq3$, the $s$-stable Kneser gragh $\operatorname{KG}(n,k)_{s-\operatorname{stab}}$ is not hom-idempotent.
\end{conjecture}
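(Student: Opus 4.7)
The plan is to mirror the strategy used in Section \ref{Schrijver} for Schrijver graphs and in the preceding section for $\operatorname{KG}(2s+2,2)_{s-\operatorname{stab}}$. Write $G=\operatorname{KG}(n,k)_{s-\operatorname{stab}}$. The argument rests on the two structural assumptions flagged in the observation preceding the conjecture: that $G$ is a core, and that Meunier's Conjecture \ref{chiKG} holds, i.e.\ $\chi(G)=n-(k-1)s$. Given the first, Proposition \ref{prop-core} reduces non-hom-idempotence to showing $G\not\to\operatorname{Cay}(\operatorname{Aut}(G),S_G)$, which we then block by a chromatic-number obstruction using the second.

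The ingredients for the chromatic comparison are already in place. By Lemma \ref{lema-shift}, $S_G=\{\sigma^i:i\in\{1,\dots,s-1\}\cup\{n-s+1,\dots,n-1\}\}$ whenever $n\geq (k+1)s-1$, and by Remark \ref{iso}, $\operatorname{Cay}(\operatorname{Aut}(G),S_G)$ is the disjoint union of two copies of $C_n^{s-1}$, so its chromatic number equals $\chi(C_n^{s-1})$. Writing $n=qs+r$ with $q\geq 1$ and $0\leq r\leq s-1$, Theorem \ref{chiC} (applied with $a=s-1$) gives $\chi(C_n^{s-1})=s+\lceil r/q\rceil$. Under Conjecture \ref{chiKG}, $\chi(G)=(q-k+1)s+r$. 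Since $n\geq(k+1)s-1$ forces $q\geq k$, the strict inequality $\chi(C_n^{s-1})<\chi(G)$ reduces to $\lceil r/q\rceil<(q-k)s+r$; the tightest case $q=k$, $r=s-1$ simplifies to $\lceil (s-1)/k\rceil<s-1$, which holds for all $k\geq 2$ and $s\geq 3$ (a short check: $\lceil(s-1)/k\rceil\leq\lceil(s-1)/2\rceil=\lfloor s/2\rfloor<s-1$ once $s\geq 3$), while cases with $q\geq k+1$ or $(q,r)\neq(k,s-1)$ with $q=k$ are strictly easier. Hence $G\not\to\operatorname{Cay}(\operatorname{Aut}(G),S_G)$, and Proposition \ref{prop-core} delivers the conclusion.

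The main obstacle is precisely the conditional nature of this argument: it presupposes both Meunier's Conjecture \ref{chiKG} in the range $n\geq (k+1)s-1$ and the core property of $\operatorname{KG}(n,k)_{s-\operatorname{stab}}$ in that same range. The chromatic-number hypothesis is the deeper bottleneck, since Meunier's conjecture has been confirmed only for scattered parameter families. The core property appears more tractable: a natural approach is to extend the argument used for $\operatorname{KG}(2s+2,2)_{s-\operatorname{stab}}$ in the previous section, namely, to identify a symmetric family of maximum independent sets on which $\operatorname{Aut}(G)$ acts transitively (analogous to the family $\{I_i\}_{i\in[n]}$) and use the precise description of neighborhoods in $\operatorname{KG}(n,k)_{s-\operatorname{stab}}$ to force any retraction to permute this family, hence to be an automorphism. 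Without these two ingredients, a direct proof would likely require an altogether different combinatorial obstruction, since the $\chi$-critical shortcut available for $s=2$ is not open here (the observed failure of $\chi$-criticality for $\operatorname{KG}(2s+2,2)_{s-\operatorname{stab}}$ illustrates this).
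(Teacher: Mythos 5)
This statement is left as a \emph{conjecture} in the paper: the authors do not prove it, but only remark that it would follow from Meunier's Conjecture \ref{chiKG} (in the range $n\geq (k+1)s-1$) together with the assertion that $\operatorname{KG}(n,k)_{s-\operatorname{stab}}$ is a core, via Proposition \ref{prop-core}, Lemma \ref{lema-shift}, Remark \ref{iso} and Theorem \ref{chiC}. Your proposal reproduces exactly this conditional reduction --- and your explicit verification that $\chi(C_n^{s-1})=s+\lceil r/q\rceil < (q-k+1)s+r$ in the stated range is correct --- and you rightly flag the same two unproven hypotheses as the genuine obstruction, so your argument matches the paper's reasoning but, like the paper's, is not an unconditional proof of the statement.
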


Finally, we end this paper with a more strong conjecture:

\begin{conjecture}
Let $s \geq 3$, $k \geq 2$ and $n > ks + 1$. Then, the $s$-stable Kneser gragh $\operatorname{KG}(n,k)_{s-\operatorname{stab}}$ is not hom-idempotent.
\end{conjecture}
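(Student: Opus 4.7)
The plan is to follow the two-step template that succeeded in Section \ref{Schrijver} and in the preceding lemma for $\operatorname{KG}(2s+2,2)_{s-\operatorname{stab}}$: combined with Proposition \ref{prop-core}, it suffices to prove that (i) $G := \operatorname{KG}(n,k)_{s-\operatorname{stab}}$ is a core, so that $G^{\bullet} = G$, and (ii) $G \not\to \operatorname{Cay}(\operatorname{Aut}(G), S_G)$.

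For step (ii), Lemmas \ref{lema-shift} and \ref{lema-shift2} describe $S_G$ explicitly, and in particular $S_G$ is contained in the rotation subgroup of $\operatorname{Aut}(G) \simeq D_{2n}$. As in Remark \ref{iso}, $\operatorname{Cay}(\operatorname{Aut}(G), S_G)$ therefore splits as the disjoint union of two isomorphic circulant graphs on $\mathbb{Z}_n$. Since $G$ is connected, any homomorphism $G \to \operatorname{Cay}(\operatorname{Aut}(G), S_G)$ factors through a single copy, so (ii) reduces to showing that $\chi(G)$ strictly exceeds the chromatic number of this circulant. In the range $n \geq (k+1)s - 1$ the circulant is $C_n^{s-1}$ and Theorem \ref{chiC} gives $\chi(C_n^{s-1}) = s + \lceil r/q \rceil$ with $n = qs + r$, $0 \leq r \leq s-1$, $q > 0$. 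Writing $n = ks + t$ with $t \geq 2$, an elementary check shows $t > \lceil r/q \rceil$, so assuming Meunier's Conjecture \ref{chiKG} yields $\chi(G) = n - (k-1)s = s + t > \chi(C_n^{s-1})$, as required. A parallel verification, using the enlarged connection set supplied by Lemma \ref{lema-shift2}, handles the narrow range $ks + 2 \leq n \leq (k+1)s - 2$.

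Step (i) is the main obstacle. In the proof of the previous lemma we exploited the $\operatorname{Aut}(G)$-orbit $S$ of centres of maximum independent sets, together with the observation that $\chi(G - v) < \chi(G)$ for $v \in S$, to conclude that every retraction must fix $S$ setwise; we then ran a short adjacency check for vertices of $V(G) \setminus S$. To generalise this, I would use the gap-profile $(l_1(v), \ldots, l_k(v))$ from Lemma \ref{lema-pareja} to partition $V(G)$ into $\operatorname{Aut}(G)$-orbits, single out the orbit $S$ consisting of the most balanced vertices, and invoke the structural description of maximum independent sets in $s$-stable Kneser graphs (Talbot's classification \cite{Talbot03}) to force every retraction to restrict to an automorphism of $G[S]$. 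Vertices $v \in V(G) \setminus S$ would then be handled by showing that any candidate image in $S$ of such a $v$ fails to be adjacent to at least one neighbour of $v$ in $S$, the choice of that neighbour being dictated by the $s$-stability constraints.

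The two bottlenecks are therefore a uniform proof of core-ness (step (i)) and a uniform lower bound $\chi(G) > s + \lceil r/q \rceil$ (step (ii)), the latter essentially equivalent to the relevant part of Meunier's Conjecture \ref{chiKG}. The first appears to be a purely combinatorial case analysis that should be tractable with enough care; the second is a longstanding open problem, which is precisely why the intermediate conjecture stated just before this one explicitly conditions on both. Any progress on Meunier's conjecture in the regime $n > ks + 1$ would, together with step (i), immediately settle the present conjecture.
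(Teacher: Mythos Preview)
The paper does not prove this statement: it is presented as an open conjecture (a strengthening of the preceding Conjecture~2), and the authors give no argument for it beyond the sentence immediately before Conjecture~2, namely that \emph{if} Meunier's Conjecture~\ref{chiKG} holds in the relevant range and \emph{if} the graphs $\operatorname{KG}(n,k)_{s-\operatorname{stab}}$ are cores, then the reasoning of Section~\ref{Schrijver} goes through. Your proposal is therefore not being compared against a proof but against an explicitly conjectural remark.

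Your outline reproduces exactly the authors' intended route --- Proposition~\ref{prop-core} plus Remark~\ref{iso} plus Theorem~\ref{chiC} --- and you correctly isolate the same two bottlenecks they do: establishing that $\operatorname{KG}(n,k)_{s-\operatorname{stab}}$ is a core for general $n,k,s$, and proving $\chi(\operatorname{KG}(n,k)_{s-\operatorname{stab}}) > \chi(C_n^{s-1})$, which you rightly observe is essentially Meunier's Conjecture~\ref{chiKG}. Your arithmetic check that $t > \lceil r/q\rceil$ under the hypothesis $t\geq 2$, $k\geq 2$ is fine. The sketch you give for step~(i), via Talbot's classification and an orbit-by-orbit adjacency argument, is plausible but is itself an open problem in this generality; you do not actually carry it out, nor does the paper.

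In short: there is no gap to diagnose because neither you nor the paper claims a proof. Your proposal is a faithful and honest reformulation of why the statement is a conjecture rather than a theorem, and it adds nothing beyond what the paper already says, except for spelling out the inequality $s+t > s+\lceil r/q\rceil$ and gesturing at a strategy for core-ness.
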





\bibliographystyle{plain}



\end{document}